\newcommand{\Res}{{\mathrm{Res}}}
\definecolor{yellow}{rgb}{1,.7,0}
\definecolor{pkured}{rgb}{0.55,0,0}
\newcommand{\be}{\begin{equation*}}
	\newcommand{\ee}{\end{equation*}}
\newcommand{\beq}{\begin{equation}}
	\newcommand{\eeq}{\end{equation}}
\numberwithin{equation}{section}
\newtheorem{cor}{Corollary}[section]
\newtheorem{lem}[cor]{Lemma}
\newtheorem{prop}[cor]{Proposition}
\newtheorem{thm}[cor]{Theorem}
\newtheorem{ex}[cor]{Example}
\newtheorem{rmk}[cor]{Remark}
\numberwithin{figure}{section}
\newcounter{x}
\newcounter{y}
\newcounter{z}
\author{Chenglang Yang}
\email{yangcl@pku.edu.cn}
\address{Institute for Math and AI, Wuhan University, Wuhan 430072, China}
\address{Hua Loo-Keng Center for Mathematical Sciences \&
	Academy of Mathematics and Systems Science,
	Chinese Academy of Sciences,
	Beijing 100190, China}
\title[{The structures of simple and monotone Hurwitz numbers with varying genus}]
{The structures of simple Hurwitz numbers and monotone Hurwitz numbers with varying genus}
\begin{document}
\maketitle
%%%%%%%%%%%%%%%%%%%%%%%%%%%%%%%%%%%%%%%%%%%%%%%%%%%%%%%%%%%%%%%%%%%%%%%%%%%%%%%%%%%

\begin{abstract}
	We study the structures of ordinary simple Hurwitz numbers and monotone Hurwitz numbers with varying genus.
	More precisely,
	we prove that when the ramification type is fixed and the genus is treated as a variable,
	the connected monotone Hurwitz number is a linear combination of products of exponentials and polynomials,
	and the ordinary simple Hurwitz number is a linear combination of exponentials.
	Using these structural properties,
	we also derive the large genus asymptotics of these two kinds of Hurwitz numbers.
	As a result,
	we prove one conjecture,
	and disprove another,
	both proposed by Do, He and Robertson.
\end{abstract}

\setcounter{section}{0}
\setcounter{tocdepth}{2}

%\tableofcontents

\section{Introduction}

The Hurwitz numbers were introduced by Hurwitz in the 1890s \cite{Hurw91}.
These numbers count branched covers between Riemann surfaces with prescribed ramification types over some branch points.
Hurwitz first studied branched covers with only simple ramifications and discovered rich structures for this kind of Hurwitz number \cite{Hurw91, Hurw01}.
In the past 30 years,
the connections between Hurwitz numbers and other fields of mathematics have been intensively studied
(see \cite{DYZ,ELSV,KL07,O00,OP06} and references therein).
Numerous significant structures of these numbers have been uncovered,
particularly in their recursion formulas,
polynomial structures,
connections to integrable hierarchies and moduli spaces of Riemann surfaces.

In this paper,
we use the notation $H_{g;\mu}$ to represent the connected simple Hurwitz number labeled by genus $g$ and ramification type $\mu$,
which counts branched covers between a genus $g$ Riemann surface and the Riemann sphere with a branch point whose ramification type is $\mu$ and some other branch points have simple ramifications (see subsection \ref{sec:subsec hurw} for a detailed description of this notation).
There are many generalizations and variants of the simple Hurwitz number (see, for examples, \cite{ACEH,DK16,GGN13,GGN14,HOY,JPT,KLS,KPS,O00}),
including orbifold Hurwitz number and monotone Hurwitz number.
The latter is also one of the objects studied in this paper.
More precisely,
we use the notation $\vec{H}_{g;\mu}$ to represent the connected monotone Hurwitz number labeled by $g$ and $\mu$
(see subsection \ref{sec:subsec mono} for a precise definition).
As the main results of this paper,
we prove that,
for any fixed partition $\mu$ and considering genus $g$ as a variable,
the connected monotone Hurwitz number $\vec{H}_{g;\mu}$ is a linear combination of products of exponentials and polynomials,
and the connected simple Hurwitz number $H_{g;\mu}$ is a linear combination of exponentials.
These structures also give the large genus asymptotics of these two kinds of Hurwitz numbers.
As a result,
we prove one conjecture,
and disprove another conjecture,
both of which are proposed by Do, He and Robertson \cite{DHR}.

The monotone Hurwitz numbers were introduced by Goulden, Guay-Paquet and Novak when they studied the Harish-Chandra--Itzykson--Zuber matrix integral \cite{GGN14},
and play an important role in the asymptotics of matrix integral.
This kind of Hurwitz number is intensively studied and possesses many interesting structures, including topological recursion and polynomiality (see, for examples \cite{DDM,DK16,DKPS,GGN13,GGN16,KLS,KPS}).
Recently,
Do, He, and Robertson \cite{DHR} proposed a conjecture about the monotone Hurwitz numbers.
They expected that,
for a fixed partition $\mu$,
the connected monotone Hurwitz number $\vec{H}_{g;\mu}$,
as a function of $g$,
is a linear combination of exponentials plus a linear term (see Conjecture 4.7 in \cite{DHR} for more details).
Their prediction was inspired by a similar structure for the ordinary simple Hurwitz numbers and their data for monotone Hurwitz numbers with $|\mu|\leq 5$.
However,
as we will demonstrate,
their prediction does not hold for general $\mu$.
The first main result of this paper is the following structural theorem for the connected monotone Hurwitz numbers with varying genus.
\begin{thm}
	\label{thm:main thm for mono}
	For the connected monotone Hurwitz numbers,
	we fix the degree $d\geq2$ and the ramification type $\mu=(\mu_1,...,\mu_l)$, which is a partition of $d$.
	Then for arbitrary genus $g$,
	we have
	\begin{align}
		\mu_1\cdots\mu_l\cdot\vec{H}_{g;\mu}
		=\sum_{k=1}^{d-1}
		\sum_{1\leq i\leq \lfloor (d-1)/k\rfloor}
		\Big(\vec{C}(\mu;k,i)\cdot (2g-2+d+l)^{i-1} \cdot k^{2g-2+d+l}\Big),
	\end{align}
	where the coefficients $\vec{C}(\mu;k,i)\in\mathbb{Q}$ with $1\leq k\leq d-1, 1\leq i\leq \lfloor (d-1)/k\rfloor$ are independent of the genus $g$.
\end{thm}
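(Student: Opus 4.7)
The plan is to first establish the analogous identity for the disconnected monotone Hurwitz number $\vec{H}^{\bullet}_{g;\mu}$ via the Jucys--Murphy character formula, and then pass to the connected number $\vec{H}_{g;\mu}$ by induction on $|\mu|$ using the exp--log (inclusion--exclusion) relation.

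The starting point is the classical expression
\[
\mu_1\cdots\mu_l\cdot\vec{H}^\bullet_{g;\mu} \;=\; \frac{1}{d!}\sum_{\lambda\vdash d}\dim\lambda\cdot\chi^\lambda_\mu\cdot h_r(c_\lambda),\qquad r:=2g-2+d+l,
\]
where $c_\lambda$ is the multiset of contents of $\lambda$ and $h_r$ is the complete homogeneous symmetric polynomial; this reflects that $h_r(J_1,\dots,J_d)$ acts on the Specht module $V_\lambda$ by the scalar $h_r(c_\lambda)$. For any $\lambda\vdash d$ the nonzero contents lie in $\{\pm 1,\dots,\pm(d-1)\}$, and since $m$ boxes of content $k\ge 1$ force at least $m(m+k)$ boxes in $\lambda$, the multiplicity of $\pm k$ is bounded by $\lfloor(d-1)/k\rfloor$. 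Partial-fraction expansion of $\prod_{c\in c_\lambda}(1-ct)^{-1}$ and extracting the $t^r$-coefficient then yields
\[
h_r(c_\lambda)=\sum_{k=1}^{d-1}\sum_{j=1}^{m_k^+(\lambda)}a^+_{k,j}(\lambda)\binom{r+j-1}{j-1}k^r+\sum_{k=1}^{d-1}\sum_{j=1}^{m_k^-(\lambda)}a^-_{k,j}(\lambda)\binom{r+j-1}{j-1}(-k)^r,
\]
with $\binom{r+j-1}{j-1}$ a polynomial of degree $j-1$ in $r$. To eliminate the $(-k)^r$ terms I would apply the conjugation involution $\lambda\mapsto\lambda'$, which negates contents, preserves $\dim\lambda$, and sends $\chi^\lambda_\mu$ to $(-1)^{d-l}\chi^\lambda_\mu$. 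Combined with the parity $r\equiv d+l\pmod 2$ (so $(-1)^{d-l+r}=1$), each $(-k)^r$-contribution of $\lambda$ pairs with the $k^r$-contribution of $\lambda'$ into purely $k^r$-type terms. This proves the theorem for $\vec{H}^\bullet_{g;\mu}$ with exactly the stated bounds on $k$ and $i$.

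For the connected case I would induct on $|\mu|$ using the exp--log relation
\[
\vec{H}^\bullet_{g;\mu} - \vec{H}_{g;\mu}\;=\;\sum_{\substack{\mu=\mu^{(1)}\sqcup\cdots\sqcup\mu^{(n)}\\ n\ge 2}}\frac{1}{|\mathrm{Aut}|}\sum_{\{g_j\}}\binom{r}{r_1,\dots,r_n}\prod_{j=1}^n\vec{H}_{g_j;\mu^{(j)}},
\]
with $r_j:=2g_j-2+|\mu^{(j)}|+l(\mu^{(j)})$ summing to $r$. The inductive hypothesis writes each factor as a linear combination of $r_j^{i_j-1}k_j^{r_j}$, and the exponential-generating-function identity $\sum_{\sum r_j=r}\binom{r}{r_1,\dots,r_n}\prod_j r_j^{i_j-1}k_j^{r_j}=r!\,[t^r]\prod_jP_j(t)e^{k_jt}$ (with $P_j$ polynomial of degree $i_j-1$ in $t$) shows that each convolution again has the form polynomial-in-$r$ times $(k_1+\cdots+k_n)^r$, with base $K=\sum k_j\le\sum(|\mu^{(j)}|-1)\le d-n\le d-1$. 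Thus the form required by the theorem is preserved, and subtracting from the disconnected value yields the statement for $\vec{H}_{g;\mu}$.

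\textbf{Main obstacle.} The delicate point is verifying the bound $i\le\lfloor(d-1)/k\rfloor$ on polynomial degrees through the convolution. The naive estimate $i=\sum i_j-n+1$ with $i_j\le\lfloor(|\mu^{(j)}|-1)/k_j\rfloor$ can exceed $\lfloor(d-1)/K\rfloor$ --- e.g.\ $d_1=d_2=3,\,k_1=k_2=1$ at $K=2$ naively allows degree $3$, while the theorem permits only degree $1$ at $d=6,\,k=2$. Resolving this will likely require using the sharper multiplicity bound $m_k^{\max}(d)=\lfloor\tfrac12(-k+\sqrt{k^2+4d})\rfloor$, which is itself stable under convolution, or else a direct cancellation argument showing that the offending high-degree terms from sub-decompositions match analogous contributions hidden inside the disconnected formula. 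This combinatorial bookkeeping is the main technical hurdle.
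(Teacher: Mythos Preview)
Your disconnected argument via the Jucys--Murphy/character formula, partial fractions, and the conjugation involution is correct and gives a clean alternative route to the disconnected structure theorem. The paper does something quite different: it never passes through disconnected numbers at all, but applies the connected $n$-point formula for a KP tau-function (formula \eqref{eqn:conn n}) directly to $\log\vec{\tau}$. This produces the generating series $\sum_{g}\hbar^{r}\vec{H}_{g;\mu}$ immediately as a finite sum over $l$-cycles of products of the two-point kernels $\widehat{\vec A}$, hence as a rational function of $\hbar$; the pole at $\hbar=1/k$ then has order at most $\lfloor(d-1)/k\rfloor$ because each factor $\prod_{j=-m_i}^{n_i}(1+j\hbar)^{-1}$ contributes at most one copy of $(1-k\hbar)^{-1}$ and the exponent bookkeeping forces $\sum_{i\in I}(m_i+n_i+1)=d$.

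The genuine error in your proposal is the exp--log relation you wrote down. The multinomial coefficient $\binom{r}{r_1,\ldots,r_n}$ is a feature of \emph{ordinary} simple Hurwitz numbers (where the generating function carries a $1/b!$, cf.\ equation \eqref{eqn:tau as expH}), not of monotone ones. For monotone Hurwitz numbers the tau-function \eqref{eqn:def vectau} has no $1/b!$, so taking $\log$ in the $p$-variables yields simply
\[
F^{\bullet}_{\mu}(\hbar)\;=\;\sum_{\pi}\prod_{B\in\pi}F_{\mu|_B}(\hbar),
\qquad
F_{\nu}(\hbar):=\sum_{g\ge0}\hbar^{\,2g-2+|\nu|+l(\nu)}\,\vec{H}_{g;\nu},
\]
a finite identity of \emph{ordinary} power series in $\hbar$ (the sum is over set partitions $\pi$ of the parts of $\mu$), with no multinomial in $r$. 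Your ``main obstacle'' is an artifact of this mistake: with the EGF convolution the bases $k_j$ add, which is what produced the spurious high degrees at $K=k_1+\cdots+k_n$.

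Once the multinomial is removed, your programme goes through with no obstacle. M\"obius inversion over set partitions writes $F_{\mu}(\hbar)$ as a $\mathbb{Q}$-linear combination of products $\prod_j F^{\bullet}_{\nu^{(j)}}(\hbar)$ with $\sum_j|\nu^{(j)}|=d$; each factor is rational with a pole at $1/k$ of order at most $\lfloor(|\nu^{(j)}|-1)/k\rfloor$ by your disconnected analysis, so the product has pole order at most
\[
\sum_j\Big\lfloor\tfrac{|\nu^{(j)}|-1}{k}\Big\rfloor
\;\le\;\Big\lfloor\tfrac{\sum_j(|\nu^{(j)}|-1)}{k}\Big\rfloor
\;=\;\Big\lfloor\tfrac{d-n}{k}\Big\rfloor
\;\le\;\Big\lfloor\tfrac{d-1}{k}\Big\rfloor,
\]
exactly the bound in the statement. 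The parity argument then finishes as in your sketch.
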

We list some examples of connected monotone Hurwitz numbers in subsection \ref{sec:mono pf}.
The simplest example that contradicts the Do--He--Robertson's prediction is $\mu=(3,3)$,
which is discussed in Example \ref{ex:disprove dhr}.

The asymptotics of geometric invariants attracted much attention from mathematicians in recent years.
For examples,
large genus behavior of intersection numbers over moduli spaces was studied in \cite{Agg21,DGZZ,EGGGL,GNYZ,GY24,LX14,MZ15} and references therein.
The large genus and large degree asymptotics of classical single Hurwitz numbers have also been investigated in \cite{DYZ}.
It is remarkable that the structural property established in Theorem \ref{thm:main thm for mono} is well-suited to this problem (see also \cite{DHR}) and directly provides a form of large genus asymptotics for the connected monotone Hurwitz numbers.
More precisely,
we obtain the following
\begin{prop}\label{prop:main prop for mono}
	When fixing the degree $d\geq2$ and the ramification type $\mu=(\mu_1,...,\mu_l)$, which is a partition of $d$,
	the connected monotone Hurwitz numbers $\vec{H}_{g,\mu}$ have the following large genus asymptotics,
	\begin{align}
		\mu_1\cdots\mu_l\cdot\vec{H}_{g;\mu}
		=\frac{2\cdot (d-1)^{d-2}}{d!\cdot (d-2)!} \cdot (d-1)^{2g-2+d+l}
		+O\big((d-2)^{2g-2+d+l}\big)
	\end{align}
	as $g\rightarrow\infty$.
\end{prop}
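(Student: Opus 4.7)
The approach is to apply Theorem~\ref{thm:main thm for mono} directly and extract the dominant term as $g\to\infty$. That theorem writes $\mu_1\cdots\mu_l\cdot \vec{H}_{g;\mu}$ as a finite sum of terms of the shape $\vec{C}(\mu;k,i)\cdot(2g-2+d+l)^{i-1}\cdot k^{2g-2+d+l}$ with $1\le k\le d-1$, so as $g$ grows the summand with the largest exponential base dominates. The largest base available is $k = d-1$, and for this $k$ the constraint $i\le \lfloor(d-1)/(d-1)\rfloor = 1$ forces $i = 1$. Hence the unique leading contribution is $\vec{C}(\mu;d-1,1)\cdot(d-1)^{2g-2+d+l}$, while every other summand has exponential base at most $d-2$ and, once its polynomial prefactor in $g$ is absorbed, contributes only to the error $O((d-2)^{2g-2+d+l})$.

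The main obstacle is to compute $\vec{C}(\mu;d-1,1)$ explicitly and to verify that it equals $\frac{2(d-1)^{d-2}}{d!(d-2)!}$, in particular that it depends only on $d$ and not on the finer structure of the partition $\mu$. My plan is to trace this coefficient through the machinery used to prove Theorem~\ref{thm:main thm for mono}: the proof there should realize $\vec{C}(\mu;k,i)$ as an explicit residue or finite combinatorial sum, and the extremal choice $k = d-1$, $i = 1$ should correspond to a single, easily isolated configuration. As an independent cross-check, since the predicted value depends only on $d$, it is enough to evaluate it in any convenient special case; the one-part case $\mu = (d)$ is natural, because monotone Hurwitz numbers of a single cycle admit closed formulas via Jucys--Murphy elements (see \cite{GGN14}) whose large-$g$ asymptotics can be read off directly, thereby pinning down the universal constant.

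Combining the leading term $\vec{C}(\mu;d-1,1)\cdot(d-1)^{2g-2+d+l}$ with the estimate for the remaining summands then yields the claimed asymptotic expansion. The factor $(d-1)^{d-2}$ is reminiscent of Cayley's enumeration of labeled trees, consistent with tree-type combinatorial pictures that arise in monotone factorization counts; this interpretation is suggestive but is not needed for the proof itself.
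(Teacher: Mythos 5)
Your first step---reducing the proposition to the single coefficient $\vec{C}(\mu;d-1,1)$ via Theorem \ref{thm:main thm for mono}---coincides with the paper's, but it already needs one more care than you give it: a summand $(2g-2+d+l)^{i-1}\cdot k^{2g-2+d+l}$ with $k=d-2$ and $i\geq 2$ is \emph{not} $O\big((d-2)^{2g-2+d+l}\big)$, since polynomial prefactors can only be absorbed when the base is strictly smaller than $d-2$. This is relevant exactly when $d=3$ (the only case where $\lfloor (d-1)/(d-2)\rfloor\geq 2$), and the paper disposes of it by checking that $\vec{C}(\mu;d-2,i)=0$ for $i\geq 2$; your proposal silently assumes this. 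The genuine gap, however, is that you never actually compute $\vec{C}(\mu;d-1,1)$. For $l(\mu)\geq 2$ this is the heart of the proof: one must return to the connected $n$-point formula \eqref{eqn:conn n}, expand the right-hand side of \eqref{eqn:vecHl as AA} into contributions indexed by $l$-cycles $\sigma$ and subsets $I\subseteq[l]$, show that a pole at $\hbar=\tfrac{1}{d-1}$ can only come from $|I|=1$ with $(m_j,n_j)=(d-1,0)$, and then use the degree bookkeeping in the variables $z_1,\dots,z_l$ to force $\sigma^{j+1}(1)>\sigma^{j+2}(1)>\cdots$, hence $\sigma=(l,l-1,\dots,1)$ and $I=\{l\}$, so that exactly one configuration survives; its residue gives $\vec{D}(\mu;d-1,1)=\frac{(d-1)^{d-2}}{d!\,(d-2)!}$ and then $\vec{C}(\mu;d-1,1)=2\vec{D}(\mu;d-1,1)$. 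Saying the extremal choice ``should correspond to a single, easily isolated configuration'' names the expected outcome but supplies none of this argument.

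Your proposed fallback is moreover circular: you write that ``since the predicted value depends only on $d$, it is enough to evaluate it in any convenient special case'' such as $\mu=(d)$. But Theorem \ref{thm:main thm for mono} only asserts that the $\vec{C}(\mu;k,i)$ are rational numbers depending on $\mu$; the independence of $\vec{C}(\mu;d-1,1)$ from the partition structure of $\mu$ is precisely part of what the proposition claims, and it is exactly what the multi-point analysis sketched above establishes. The one-part evaluation (which the paper does carry out, as a residue of the one-point function) pins down the constant only \emph{after} universality in $\mu$ is known, so it cannot substitute for the $l\geq 2$ computation. As written, the proposal is a correct reduction plus a plan, not a proof.
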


For the ordinary simple Hurwitz number,
there is a similar result.
First,
for the classical single Hurwitz number $H_{g;(1^d)}$ case,
where $(1^d)=(1,...,1)$ is a partition of $d$,
Hurwitz discovered this structural property \cite{Hurw91, Hurw01}.
He proved that $H_{g;(1^d)}$,
as a function of $g$,
is a linear combination of exponentials.
Later,
other mathematicians further studied this structure.
For instance,
Dubrovin, Yang, and Zagier \cite{DYZ} revisited this structure and proved additional properties using the Toda equation satisfied by the generating function of classical single Hurwitz numbers.
Recently,
Do, He, and Robertson \cite{DHR} extended Hurwitz's result to the general $\mu$ case.
More precisely,
they proved that,
for any fixed $\mu$,
which is a partition of $d\geq2$,
there are some integers $C(\mu;k)\in\mathbb{Z}$ for $1\leq k\leq \binom{d}{2}$ such that,
\begin{align*}
	H_{g;\mu}
	=\frac{2}{d! \cdot \mu_1 \cdots \mu_{l}}
	\cdot \sum_{k=1}^{\binom{d}{2}}
	\Big(C(\mu;k) \cdot k^{d+2g-2+l}\Big)
\end{align*}
holds for all genus $g\in\mathbb{Z}_{\geq0}$.
Their method is based on the fermionic Fock space (see \cite{J15,O00}) and has the potential to be applied to study some other models.
They also obtained the leading term $C(\mu;\binom{d}{2})=1$ and proposed a conjecture which states the vanishing of $C(\mu;k)$ for $\binom{d-1}{2}<k<\binom{d}{2}$.
See their Conjecture 4.1 in \cite{DHR}.
The second main result of this paper is a proof of their conjecture.
Our method not only proves this vanishing result
but also provides a formula for the next non-zero term $C(\mu;\binom{d-1}{2})$.
This leads to a better asymptotic formula for the connected simple Hurwitz numbers.
The main result concerning simple Hurwitz numbers can be stated as the following
\begin{thm}[Do--He--Robertson's Theorem 1.5 and Conjecture 4.1 \cite{DHR}]
	\label{thm:main thm for hurw}
	For the connected simple Hurwitz numbers,
	we fix the degree $d\geq2$ and the ramification type $\mu=(\mu_1,...,\mu_{l})$,
	which is a partition of $d$.
	Then there are some integers $C(\mu;k)\in\mathbb{Z}, 1\leq k\leq \binom{d}{2}$ such that,
	for arbitrary genus $g\geq0$,
	\begin{align}
		H_{g;\mu}
		=\frac{2}{d! \cdot \mu_1 \cdots \mu_{l}}
		\cdot \sum_{k=1}^{\binom{d}{2}}
		\Big(C(\mu;k) \cdot k^{d+2g-2+l}\Big).
	\end{align}
	Moreover,
	these coefficients satisfy
	$C(\mu;k)=0$ for $\binom{d-1}{2}<k<\binom{d}{2}$
	and
	\begin{align}
		C(\mu;\binom{d}{2})=1,
		\qquad \qquad C(\mu;\binom{d-1}{2})=-d\cdot \#\{i\ |\ \mu_i=1\}.
	\end{align}
	Thus,
	for fixed $\mu$,
	as $g\rightarrow\infty$,
	we have
	\begin{align}\label{eqn:H asym}
		\begin{split}
		H_{g;\mu}
		=\frac{2}{d!\cdot \mu_1\cdots \mu_l}
		\binom{d}{2}^{d+2g-2+l}
		-&\frac{2\cdot \#\{i\ |\ \mu_i=1\}}{(d-1)!\cdot \mu_1\cdots \mu_l}\binom{d-1}{2}^{d+2g-2+l}\\
		&\qquad+O\Bigg(\bigg(\binom{d-1}{2}-1\bigg)^{d+2g-2+l}\Bigg).
		\end{split}
	\end{align}
\end{thm}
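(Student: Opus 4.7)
The plan is to combine the Frobenius character formula for disconnected simple Hurwitz numbers with the exponential formula relating connected and disconnected counts, and track the top two coefficients of the $k^n$ expansion by induction on the degree $d$.

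I would start from the Frobenius formula
\begin{align*}
	H^{\bullet}_{g;\mu}=\frac{1}{d!\,\mu_1\cdots\mu_l}\sum_{\lambda\vdash d}\dim(\lambda)\,\chi^\lambda(\mu)\,c(\lambda)^{n},
\end{align*}
where $n=d+2g-2+l$ and $c(\lambda)=\sum_{(i,j)\in\lambda}(j-i)$ is the content sum (the normalization here corresponds to the labeled convention for $\mu$ used by Do--He--Robertson). Pairing each $\lambda$ with its conjugate $\lambda'$ and using $(-1)^n\mathrm{sgn}(\mu)=1$ (from $n\equiv d+l\pmod 2$ and $\mathrm{sgn}(\mu)=(-1)^{d-l}$) collapses this to
\begin{align*}
	H^{\bullet}_{g;\mu}=\frac{2}{d!\,\mu_1\cdots\mu_l}\sum_{\substack{\lambda\vdash d\\ c(\lambda)>0}}\dim(\lambda)\,\chi^\lambda(\mu)\,c(\lambda)^{n}.
\end{align*}
Since $c(\lambda)$ is maximized uniquely at $\lambda=(d)$ with $c((d))=\binom{d}{2}$, while the next-largest value over $\lambda\vdash d$ is $c((d-1,1))=\binom{d-1}{2}-1$, we conclude that $H^{\bullet}_{g;\mu}$ has leading coefficient $1$ at $k=\binom{d}{2}$ (since $\dim((d))=\chi^{(d)}(\mu)=1$) and no exponential contribution whatsoever in the range $\binom{d-1}{2}\leq k<\binom{d}{2}$.

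Next I would invoke the exponential formula
\begin{align*}
	H^{\bullet}_{g;\mu}=H_{g;\mu}+\sum_{\pi}\binom{n}{(n_B)_{B\in\pi}}\prod_{B\in\pi}H_{g_B;\mu_B},
\end{align*}
summed over nontrivial set partitions $\pi$ of $\{1,\ldots,l\}$, with $(g_B,n_B)$ constrained by Riemann--Hurwitz on each block. The only correction able to reach exponential rate $\binom{d-1}{2}^n$ is the \emph{degree-$1$ split-off}: one block is a singleton $\{j\}$ with $\mu_j=1$, which forces $g_B=n_B=0$ and represents a trivial degree-$1$ component, while the remaining block has ramification $\mu\setminus\{1\}$, degree $d-1$, genus $g+1$, and exactly $n$ simple branch points. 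With $m_1(\mu)$ such choices, this contribution equals $m_1(\mu)\,H_{g+1;\mu\setminus\{1\}}$; its leading term, by the Do--He--Robertson structure theorem applied inductively to $\mu\setminus\{1\}$, is $m_1(\mu)\cdot\frac{2}{(d-1)!\,\mu_1\cdots\mu_l}\binom{d-1}{2}^n$. Re-expressing this in the theorem's normalization immediately produces $C(\mu;\binom{d-1}{2})=-d\cdot m_1(\mu)$.

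Finally, I would show that every remaining set partition contributes only exponentials $k^n$ with $k<\binom{d-1}{2}$. For such a partition with blocks of degrees $d_1,\ldots,d_r$, applying the Do--He--Robertson structural theorem to each non-trivial factor (degree $d_i\geq 2$) and using the multinomial identity $\sum_{(n_i):\sum n_i=n}\binom{n}{(n_i)}\prod k_i^{n_i}=(\sum k_i)^n$, one reorganizes the product $\prod_B H_{g_B;\mu_B}$ (weighted by the multinomial coefficient and summed over the allowed configurations) into a linear combination of single exponentials with bases $\sum_{d_i\geq 2}k_i\leq\sum_{d_i\geq 2}\binom{d_i}{2}$. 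Excluding the degree-$1$-split-off analyzed above, the maximum attainable base is $\binom{d-2}{2}+1$, realized by the $(d-2,2)$-splitting. The elementary inequality $\binom{d-2}{2}+1<\binom{d-1}{2}$ for $d\geq 4$ then places all these contributions strictly below the range of interest, and the small cases $d=2,3$ can be verified directly. Combining the three steps yields the claimed vanishing and the next non-zero coefficient, from which the asymptotic \eqref{eqn:H asym} follows at once by reading off the top two terms.

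The main technical obstacle is the last step: individual products $\prod_B H_{g_B;\mu_B}$ are not themselves single exponentials in $n$, so one must carefully distribute the $n$ simple branch points among blocks via the multinomial coefficient, account for parity and lower-bound constraints on each $n_B$, and then invoke the binomial/multinomial identity to reorganize into genuine $k^n$ summands without letting any spurious exponential of base $\geq\binom{d-1}{2}$ slip in.
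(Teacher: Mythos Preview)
Your approach is correct and genuinely different from the paper's. The paper works directly with the connected generating function via its KP affine coordinates: it expresses $\sum_g \hbar^{2g-2+d+l} H_{g;\mu}/(2g-2+d+l)!$ as a sum over $l$-cycles $\sigma$ and subsets $I\subseteq[l]$ of products of the two-point kernels $\widehat A(z_i,z_j)$, and then does a case analysis on $(m_i,n_i,h_i)$ to determine exactly which configurations can produce an exponential of base $k\ge\binom{d-1}{2}$. This re-derives the structural theorem and the integrality of $C(\mu;k)$ from scratch, and the same machinery is what the paper uses for the monotone case.

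Your route instead exploits that the \emph{disconnected} Frobenius formula already has the gap built in: the content sums $c(\lambda)$ for $\lambda\vdash d$ skip the interval $[\binom{d-1}{2},\binom{d}{2})$ entirely, so the only source of a $\binom{d-1}{2}^n$ term in $H_{g;\mu}$ is the exponential-formula correction, and the unique correction reaching that base is the split-off of a single trivial degree-$1$ sheet. This is more elementary (no KP theory) and conceptually transparent, but it takes Do--He--Robertson's structural theorem as input rather than re-proving it. The technical obstacle you flag is genuine: the boundary terms from the parity and lower-bound constraints on the $n_B$ are of the form $(\text{polynomial in }n)\cdot k^n$ and are not themselves finite sums of pure exponentials. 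The clean way to close this is an asymptotic argument rather than an exact reorganization: since $H_{g;\mu}$, $H^\bullet_{g;\mu}$, and the degree-$1$ split-off term are each known to be pure exponential sums in $n$, so is their difference $R(n)$, and your estimate $R(n)=O\big(n^D(\binom{d-2}{2}+1)^n\big)$ forces every base appearing in $R$ to be at most $\binom{d-2}{2}+1<\binom{d-1}{2}$. This bypasses the need to exhibit the cancellation explicitly.
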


It was pointed out in Section 5.1 of \cite{DYZ} and Section 4.1 of \cite{DHR} that the approximation of the first term in the asymptotic formula \eqref{eqn:H asym} is already extremely precise.
It seems that,
based on our Theorem \ref{thm:main thm for hurw},
one possible reason for this precise is the vanishing of the next $(d-2)$ terms.

The remainder of this paper is organized as follows.
In Section \ref{sec:pre},
we first review the definition of Hurwitz numbers and their relation to counting constellations.
We then review the KP hierarchy and a formula for computing connected $n$-point function of KP tau-functions,
which serves as the primary tool used in this paper to study the structure of Hurwitz numbers.
In Section \ref{sec:mono},
we investigate the structure of connected monotone Hurwitz numbers,
which proves Theorem \ref{thm:main thm for mono} and Proposition \ref{prop:main prop for mono}.
In Section \ref{sec:Hurw},
we study the large genus asymptotics of connected simple Hurwitz numbers and prove Theorem \ref{thm:main thm for hurw}.

\section{Preliminaries}
\label{sec:pre}
In this section,
we review the notions of Schur polynomials,
Hurwitz numbers,
and KP hierarchy.
The fascinating thing is that there are deep relations between them.

\subsection{Partitions and Schur polynomials}
In this subsection,
we review the integer partitions and Schur polynomials.
They play an important role in combinatorics and mathematical physics.
We recommend the book \cite{Mac}.

Let $d$ be a non-negative integer.
A partition of $d$ is a sequence of positive integers $\mu=(\mu_1,...,\mu_l)$ satisfying
\begin{align*}
	\mu_1\geq\mu_2\geq\cdots\geq\mu_l,
	\qquad \sum_{i=1}^l \mu_i=d.
\end{align*}
We use $|\mu|=d$ and $l(\mu)=l$ to denote the size and length of the corresponding partition $\mu$, respectively.
For convenience,
the empty partition $\emptyset=(0)$ is a partition of $0$ and its length is also $0$.
We denote $\mathcal{P}$ as the set of all partitions.
Each partition $\mu$ has a unique Young diagram corresponding to it,
which has $\mu_i$ boxes in its $i$-th row.
Then,
intuitively,
the conjugate partition $\mu^t$ of $\mu$ is obtained by reflecting the Young diagram corresponding to $\mu$ along the main diagonal.
More precisely,
$\mu^t$ is a partition of length $\mu_1$ and
\begin{align*}
	\mu^t_j:=\#\{i|\mu_i\geq j\},
	\qquad 1\leq j\leq \mu_1.
\end{align*}

The Frobenius notation of a partition $\mu$ is given by $(m_1,...,m_r|n_1,...,n_r)$,
where $r$ is the maximal integer such that $\mu_r\geq r$ and
\begin{align*}
	m_i:=\mu_i-i,
	\qquad n_i:=\mu^t_i-i,
	\qquad 1\leq i \leq r.
\end{align*} 
As a result,
the notation $(a|b)$ represents $(a+1,1,,,1)$, which is a partition of $a+b+1$.
For a partition $\mu=(\mu_1,...,\mu_l)$,
we define
\begin{align}\label{eqn:def kappa}
	\kappa_{\mu}=\sum_{i=1}^l \mu_i(\mu_i-2i+1).
\end{align}
This combinatorial number is related to the special character value of irreducible representations of symmetric groups and is important in studying the integrability of Hurwitz numbers \cite{O00}.

Schur introduced the Schur polynomials when he studied the representation theory of symmetric groups $S_d$.
They are a family of polynomials in variables $\mathbf{t}=(t_1,t_2,...)$ labeled by partitions.
More precisely,
for the case $l(\mu)\leq1$,
the elementary Schur polynomials are defined by
\begin{align*}
	\sum_{n=0}^\infty \big(z^n \cdot s_{(n)}(\mathbf{t})\big)
	:=\exp\Big(\sum_{m=1}^\infty \big(t_m \cdot z^m\big)\Big).
\end{align*}
In general,
the Schur polynomial labeled by $\mu=(\mu_1,...,\mu_l)$ is
\begin{align*}
	s_{\mu}(\mathbf{t})
	=\det\big(s_{(\mu_i+j-i)}(\mathbf{t})\big)_{1\leq i,j\leq l}.
\end{align*}
Then it is obvious that if we assign $\deg t_n=n$,
then $s_{\mu}(\mathbf{t})$ is a homogeneous polynomial of degree $|\mu|$.

It is well known that the set of irreducible representations of $S_{d}$ has a one-to-one correspondence to the set of partitions of $d$.
More precisely,
for a partition $\mu$ of $d$,
one can simply expand the Schur polynomial $s_{\mu}(\mathbf{t})$ as
\begin{align*}
	s_{\mu}(\mathbf{t})
	=\sum_{\lambda\in\mathcal{P}}
	\bigg(\frac{\chi^{\mu}_{\lambda}}{z_{\lambda}} \cdot \prod_{i=1}^{l(\lambda)}(\lambda_i\cdot t_{\lambda_i})\bigg),
\end{align*}
where $z_{\lambda}:=\prod_{j=1}^\infty m_j(\lambda)!\cdot j^{m_j(\lambda)}$ and $m_j(\lambda):=\#\{i|\lambda_i=j\}$.
These coefficients $\chi^{\mu}_{\lambda}$ are non-zero only when $|\lambda|=d$ and are actually character values of the irreducible representation labeled by $\mu$ of symmetric group $S_{d}$.
For example,
the dimension of the irreducible representation labeled by $\mu$ is given by the hook-length formula
\begin{align*}
	\chi^{\mu}_{(1^d)}
	=\frac{d!}{\prod_{(i,j)\in\mu} h(i,j)}, 
\end{align*}
where the notation $(i,j)\in\mu$ means $1\leq i\leq l(\mu), 1\leq j\leq \mu_i$ and $h(i,j):=\mu_i+\mu^t_j-i-j+1$.
This dimension formula corresponds to (see I. 3. Example 5 in \cite{Mac})
\begin{align}\label{eqn:schur hook}
	s_{\mu}(\delta_{k,1})
	=[t_1^d]\ s_{\mu}(\mathbf{t})
	=\frac{1}{\prod_{(i,j)\in\mu} h(i,j)},
\end{align}
where the notation $[t_1^d]\ s_{\mu}(\mathbf{t})$ means taking the coefficient of $t_1^d$ in $s_{\mu}(\mathbf{t})$.
This formula will be useful in deriving the affine coordinates of generating functions of simple Hurwitz numbers and monotone Hurwitz numbers in the next two sections.

\subsection{The single Hurwitz numbers}
\label{sec:subsec hurw}
In this subsection,
we review the geometric definition of Hurwitz numbers and their equivalent definition by counting constellations.

The ordinary Hurwitz number counts branched covers between Riemann surfaces with fixed ramification types over branch points.
In this paper,
we focus on the case that counts branched covers over the Riemann sphere $\mathbb{P}^1$.
This case is particularly interesting and has been intensively studied in recent years
(see \cite{ACEH,DYZ,ELSV,GJV,HOY,KL07,O00,OP06} and references therein).

More precisely,
let $d$ be a positive integer and $\mu^1,...,\mu^k$ be $k$ partitions of $d$.
We choose $k$ points $q_1,...,q_k\in\mathbb{P}^1$.
Without loss of generality,
we can assume $q_1=\infty\in\mathbb{P}^1$.
For a branched cover $f$ from a possibly disconnected genus $g$ Riemann surface $\Sigma_g$ to $\mathbb{P}^1$,
if $q\in\mathbb{P}^1$ has $l(\mu)$ pre-images such that, near the $j$-th pre-image $f$ looks like $z\mapsto z^{\mu_j}, j=1,...,l(\mu)$ locally,
we say that the ramification type of $f$ over the point $q$ is $\mu$.
Then the ramification types of $f$ over the whole Riemann sphere are called $(\mu^1,...,\mu^k)$ if $f$ ramifies over these $k$ points $q_1,...,q_k$ with ramification types described by partitions $\mu^1,...,\mu^k$, respectively,
and is unramified over other points.
For such a branched cover $f$,
an automorphism $\phi$ of $f$ is a bi-holomorphic map from $\Sigma_g$ to itself such that $f=f\circ\phi$ and we extra assume $\phi$ preserves the pre-images of $q_1=\infty\in\mathbb{P}^1$.

\begin{rmk}\label{rmk:explain autmu}
	Actually,
	the condition $\phi$ preserves the pre-images of $q_1$ only causes an extra factor 
	$|{\rm Aut}(\mu)|:=\prod_{j=1}^\infty m_j(\mu)!$ if $m_j(\mu):=\#\{i|\mu_i=j\}$ to the Hurwitz numbers,
	where $\mu$ is the ramification type of corresponding branched cover over $q_1$.
	We use this notation following \cite{DHR,GJV} for convenience.
\end{rmk}

The (disconnected) genus $g$ Hurwitz number labeled by partitions $\mu^1,...,\mu^k$ is then defined by
\begin{align*}
	H^{\bullet}_{g}(\mu^1,...,\mu^k)
	:=\sum_{f} \frac{1}{|\text{Aut}(f)|},
\end{align*}
where the sum is over branched covers $f$ from a possibly disconnected genus $g$ Riemann surface to $\mathbb{P}^1$ with ramification types described by $\mu^1,...,\mu^k$.
The Riemann--Hurwitz formula tells us $H^{\bullet}_{g}(\mu^1,...,\mu^k)$ is zero unless the following equation holds
\begin{align}\label{eqn:RH formula}
	2g-2
	=d\cdot (k-2)-\sum_{i=1}^k l(\mu^i).
\end{align}
The connected Hurwitz number $H_{g}(\mu^1,...,\mu^k)$ is defined similarly and only counts branched covers from a connected Riemann surface to $\mathbb{P}^1$ as
\begin{align*}
	H_{g}(\mu^1,...,\mu^k)
	:=\sum_{f:\text{connected}} \frac{1}{|\text{Aut}(f)|}.
\end{align*}
In this paper,
we are interested in the so-called simple Hurwitz numbers.
In this special case,
most of the branch points have simple ramification type described by $(2,1^{d-2})$ and only the point $q_1=\infty\in\mathbb{P}^1$ has an arbitrary ramification type.
We use the following notation to represent the simple Hurwitz number labeled by genus $g$ and partition $\mu$,
\begin{align*}
	H_{g;\mu}
	:=H_{g}\big(\mu,(2,1^{d-1})...,(2,1^{d-1})\big),
\end{align*}
where there are $b$ branch points with simple ramification type,
and $b$ is equal to $2g-2+d+l(\mu)$ by the Riemann--Hurwitz formula \eqref{eqn:RH formula}.
Similarly,
one can also define the disconnected simple Hurwitz numbers $H^{\bullet}_{g;\mu}$.

Below,
we introduce the equivalent definition of simple Hurwitz numbers via counting constellations,
which provides a combinatorial method to compute Hurwitz numbers.
We recommend the books \cite{CM16,LZ04} for more explanations.
A constellation is a sequence of permutations $(\sigma_1,...,\sigma_n)$,
where $\sigma_i\in S_d, 1\leq i\leq n$ such that the following two conditions are satisfied:

i). The subgroup $\langle\sigma_1,...,\sigma_n\rangle$ acts transitively on the set $\{1,...,d\}$,

ii). The product of $\sigma_i$ is equal to the identity permutation,
i.e., $\sigma_1\cdots\sigma_n=id$.

\noindent The constellations are related to the geometry of moduli spaces, Grothendieck's dessins d'Enfants, matrix integrals, and integrable hierarchies.
See \cite{LZ04} for more details on these connections.

Via counting constellations,
the connected simple Hurwitz number $H_{g;\mu}$ is equal to 
\begin{align}\label{eqn:eq def H}
	\begin{split}
	H_{g;\mu}
	=\frac{|{\rm Aut}(\mu)|} {|\mu|!}
	\cdot\#\{(\sigma_1,...,\sigma_{b+1})
	|&\ \sigma_1\in C_{\mu},
	\ \sigma_i\in C_{(2,1^{d-2})}, 2\leq i\leq b+1,\\
	&\ \sigma_1\cdots\sigma_{b+1}=id,
	\ \langle\sigma_1,...,\sigma_{b+1}\rangle\text{\ is\ transitive}\},
	\end{split}
\end{align}
where $C_{\mu}$ is the conjugate class of $S_d$ represented by $\mu$ such that permutations in $C_{\mu}$ have cycles of length $\mu_1,...,\mu_{l(\mu)}$.
See Section 7 in \cite{CM16} for a detailed construction of the one-to-one correspondence between branched covers and constellations.
The factor $|{\rm Aut}(\mu)|$ comes from our definition of Hurwitz numbers.
See the explanation in Remark \ref{rmk:explain autmu}.
One can find the equivalent definition of the disconnected simple Hurwitz numbers via counting constellations similarly with dropping out the transitive condition.

\subsection{The monotone Hurwitz numbers}
\label{sec:subsec mono}
In this subsection,
we review the monotone Hurwitz numbers and mainly follow the papers \cite{GGN13,GGN14}.

Goulden, Guay-Paquet, and Novak introduced the monotone Hurwitz numbers when they studied the asymptotic expansion of the Harish-Chandra--Itzykson--Zuber matrix integral.
These numbers count restricted subsets of branched covers,
and can also be viewed as counting random monotone walks on symmetric groups.
We use the following definition via counting constellations
\begin{align}\label{eqn:def mono}
	\begin{split}
		\vec{H}_{g;\mu}
		:=&\frac{|{\rm Aut}(\mu)|} {|\mu|!}
		\cdot\#\{(\sigma_1,...,\sigma_{b+1})
		|\ \sigma_1\in C_{\mu},
		\ \sigma_i\in C_{(2,1^{d-2})}, 2\leq i\leq b+1,\\
		&\qquad\ \sigma_1\cdots\sigma_{b+1}=id,
		\ \langle\sigma_1,...,\sigma_{b+1}\rangle\text{\ is\ transitive},
		\ (\sigma_2,...,\sigma_{b+1}) \text{\ is\ monotone}\},
	\end{split}
\end{align}
where most of the notations are similar to those used in equation \eqref{eqn:eq def H},
and the last condition means that,
if we write $\sigma_i=(A_i,B_i), 2\leq i\leq b+1$ such that $A_i<B_i$,
then we must have $B_2\leq B_3\leq\cdots\leq B_{b+1}$.

The generating function of monotone Hurwitz numbers satisfies the monotone cut-and-join equation as in Theorem 1.1 of \cite{GGN13} (See also \cite{DKPS}).
This cut-and-join equation provides an efficient algorithm to compute monotone Hurwitz numbers.
Moreover,
this equation is used in studying the polynomiality of monotone Hurwitz numbers, KP integrability, and topological recursion (see \cite{DDM,DKPS,GGN13}).
These numbers can also be generalized to the mixed Hurwitz numbers as in \cite{GGN16}.

\subsection{Connected $n$-point function of tau-functions of the KP hierarchy}
\label{sec:KP}
In this subsection,
we review the KP hierarchy and a formula computing the connected $n$-point function of KP tau-functions.

The Kadomtsev--Petviashvili (KP) hierarchy is an infinite-dimensional generalization of the KP equation (see \cite{DJM} for an introduction).
It consists of infinitely many nonlinear partial differential equations satisfied by a tau-function $\tau(\mathbf{t})$, where $\mathbf{t}=(t_1,t_2,...)$ are time variables.
This hierarchy is considered as an infinite-dimensional integrable system by many mathematicians and it has deep connections with Kac--Moody Lie algebra and mathematical physical model.
In this paper,
we are only concerned with the formal power series tau-function.
That is to say,
we always assume $\tau(\mathbf{t})\in\mathbb{C}[\![\mathbf{t}]\!]$.
An equivalent description of the KP hierarchy is the following Hirota bilinear relation satisfied by $\tau(\mathbf{t})$,
\begin{align}
	\label{eqn:hirota}
	\int\ e^{\xi(\mathbf{t}-\mathbf{t}',z)}
	\tau(\mathbf{t}-[z^{-1}]) \tau(\mathbf{t}'+[z^{-1}])\ dz=0,
\end{align}
where $\xi(\mathbf{t},z):=\sum_{k=1}^\infty t_k z^k$, and
\begin{align*}
	\mathbf{t}\pm[z^{-1}]
	=(t_1\pm z^{-1}, t_2\pm\frac{1}{2}z^{-2},
	...,t_n\pm\frac{1}{n}z^{-n},...).
\end{align*}

The set of all non-zero formal power series tau-functions of the KP hierarchy (scaling by a non-zero constant) is isomorphic to a semi-infinite dimensional Grassmannian,
which is called the Sato Grassmannian in literature (see \cite{Sato}).
There is a natural cell decomposition for this Grassmannian,
and tau-functions corresponding to points in the big cell satisfy $\tau(\mathbf{0})\neq0$,
which forms an affine space $\mathbb{C}^{\mathbb{N}\times\mathbb{N}}$.
Without loss of generality,
when discussing this kind of tau-function,
we always assume $\tau(\mathbf{0})=1$.
Then, each of these kinds of tau-functions can be described by the so-called affine coordinates $\{a_{n,m}\}_{n,m\in\mathbb{Z}_{\geq0}}$,
which are just the coordinates of the point, corresponding to this tau-function, in the affine space (see also \cite{BY17, Z15}).
These numbers $\{a_{n,m}\}_{n,m\in\mathbb{Z}_{\geq0}}$ are canonical coordinates of this tau-function.

We denote $\tau^A(\mathbf{t})$ the tau-function of KP hierarchy, which is determined by these affine coordinates $\{a_{n,m}\}_{n,m\in\mathbb{Z}_{\geq0}}$ and satisfies $\tau^A(\mathbf{0})=1$.
To obtain a detailed description of this tau-function,
we consider the following Schur polynomial expansion of this tau-function,
\begin{align}\label{eqn: tau^A}
	\tau^A(\mathbf{t})
	=\sum_{\lambda\in\mathcal{P}}
	\big(c_\lambda \cdot s_{\lambda}(\mathbf{t})\big)
\end{align}
with $c_{\emptyset}=1$.
It is known that the Hirota bilinear relation \eqref{eqn:hirota} is equivalent to the following relations for the coefficients 
\begin{align*}
	c_{\lambda}
	=\det \big(c_{(m_i|n_j)}\big)_{1\leq i,j\leq r}
\end{align*}
where $(m_1,...,m_r|n_1,...,n_r)$ is the Frobenius notation of $\lambda$.
Thus the set of all coefficients $\{c_{\lambda}\}_{\lambda\in\mathcal{P}}$ is determined by $\{c_{(a|b)}\}_{a,b\geq0}$.
Moreover,
for the tau-function $\tau^A(\mathbf{t})$ specified by these affine coordinates $\{a_{n,m}\}_{n,m\in\mathbb{Z}_{\geq0}}$,
these numbers $c_{(a|b)}$ are given by the affine coordinates up to a sign as (see Section 3 in \cite{Z15}),
\begin{align}\label{eqn:a as c}
	a_{n,m}=(-1)^n \cdot c_{(m|n)}.
\end{align}
Thus, equation \eqref{eqn: tau^A} gives us a precise description of the tau-function $\tau^A(\mathbf{t})$ since all $c_{\lambda}, \lambda\in\mathcal{P}$ are determined from affine coordinates $\{a_{n,m}\}_{n,m\in\mathbb{Z}_{\geq0}}$.

In principle,
from the above discussions,
all properties of the tau-function $\tau^A(\mathbf{t})$ are determined by its affine coordinates $\{a_{n,m}\}_{n,m\in\mathbb{Z}_{\geq0}}$.
In mathematical physical models,
the tau-function is in general an exponential-type generating function of geometric invariants. 
So, we are particularly interested in computing the expansion coefficients of $\log \tau(\mathbf{t})$.
We call them the connected $n$-point correlators and their generating function as connected $n$-point function.
There is a ``determinantal" type formula that computes the connected $n$-point function.
This kind of formula has appeared in many literatures in different forms.
See,
for examples, \cite{BE12,TW94,Z15} and references therein.
This paper follows the notation in \cite{Z15}.
We first need to consider the following generating function of the affine coordinates $\{a_{n,m}\}_{n,m\in\mathbb{Z}_{\geq0}}$
\begin{align}\label{eqn:A as a}
	A(z,w):=\sum_{n,m\geq0}
	\big(a_{n,m} \cdot z^{-n-1} w^{-m-1}\big).
\end{align}
This generating function is related to the fermionic two-point function in boson-fermion correspondence and the Baker--Akhiezer function in integrable system.
Then,
the formula for connected $n$-point function is,
for $n=1$ (see equation (211) in \cite{Z15}),
\begin{align}\label{eqn:conn n=1}
	\sum_{j\geq1}
	\bigg(\frac{\partial \log \tau^A(\mathbf{t})}{\partial t_{j}} \Big|_{\mathbf{t}=0}
	\cdot z^{-j-1}\bigg)
	=A(z,z),
\end{align}
and for $n\geq2$ (see Theorem 5.3 in \cite{Z15}),
\begin{align}\label{eqn:conn n}
	\begin{split}
	\sum_{j_1,\cdots,j_n\geq 1}&
	\bigg(\frac{\partial^n \log \tau^A(\mathbf{t})}{\partial t_{j_1} \cdots \partial t_{j_n}} \Big|_{\mathbf{t}=0}
	\cdot z_1^{-j_1-1} \cdots z_n^{-j_n-1}\bigg) \\
	=& (-1)^{n-1}\cdot \sum_{\text{$n$-cycles } \sigma}
	\Big( \prod_{i=1}^n \widehat A (z_{\sigma^{i}(1)},z_{\sigma^{i+1}(1)}) \Big)
	- i_{z_1,z_2}\frac{\delta_{n,2}}{(z_1-z_2)^2},
	\end{split}
\end{align}
where
\begin{align}\label{eq-AandAhat}
	\widehat A(z_i,z_j) =
	\begin{cases}
		i_{z_i,z_j} \frac{1}{z_i-z_j} + A(z_i,z_j),\qquad
		& i<j,\\
		i_{z_j,z_i} \frac{1}{z_i-z_j} + A(z_i,z_j), &i>j,
	\end{cases}
\end{align}
and the notation $i_{z,w} f(z,w)$ means we expand the function $f(z,w)$ in the region $|z|>|w|$,
i.e., for example, $i_{z,w} \frac{1}{z-w} = \sum_{h\geq 0} z^{-1-h}w^h$.

\section{The structure of monotone Hurwitz numbers}
\label{sec:mono}
In this section,
we study the structure of monotone Hurwitz numbers using the KP integrability of their generating function.
We prove Theorem \ref{thm:main thm for mono} and Proposition \ref{prop:main prop for mono}.

\subsection{The KP integrability of monotone Hurwitz numbers}
The monotone Hurwitz numbers are intensively studied (see \cite{DDM,DK16, DKPS,GGN13, GGN14, GGN16, KLS,KPS,WY23} and references therein),
including their connection to the matrix model,
integrable hierarchy,
combinatorial structure and topological recursion.
The generating function of monotone Hurwitz numbers is defined as
\begin{align}\label{eqn:def vectau}
	\vec{\tau}(\hbar;\mathbf{t})
	=\exp
	\bigg(\sum_{g\geq 0 \atop \mu\in\mathcal{P}}
	\frac{\hbar^{2g-2+l(\mu)+|\mu|}
		\cdot \vec{H}_{g;\mu}}
		{|{\rm Aut}(\mu)|}
	\cdot \prod_{i=1}^{l(\mu)} \big(\mu_i\cdot t_{\mu_i}\big)\bigg),
\end{align}
where $|{\rm Aut}(\mu)|=\prod_{j=1}^\infty m_j(\mu)!$ if $m_{j}(\mu):=\#\{i|\mu_i=j\}$.

\begin{rmk}
	Notice that the above definition of generating function $\vec{\tau}(\hbar;\mathbf{t})$ of monotone Hurwitz numbers matches with the generating function used by Goulden--Guay-Paquet--Novak in \cite{GGN13} (see their equation (1.1)).
	Actually,
	our monotone Hurwitz number $\vec{H}_{g;\mu}$ is equal to $\frac{|{\rm Aut}(\mu)|}{|\mu|!}$ times their monotone Hurwitz number $\vec{H}^{2g-2+l(\mu)+|\mu|}(\mu)$ from the definition \eqref{eqn:def mono} and our $t_i$ is equal to their $p_i/i$.
\end{rmk}

From the definition of monotone Hurwitz numbers via counting constellations reviewed in subsection \ref{sec:subsec mono},
this generating function $\vec{\tau}(\hbar;\mathbf{t})$ admits a Schur polynomial expansion as
(see Section 2 in \cite{GGN16})
\begin{align}\label{eqn:vectau as s}
	\vec{\tau}(\hbar;\mathbf{t})
	=\sum_{\lambda\in\mathcal{P}}
	\bigg(\prod_{(i,j)\in\lambda}\frac{1}{1-c(i,j)\cdot \hbar}
	\cdot s_{\lambda}(\delta_{k,1})
	\cdot s_{\lambda}(\mathbf{t})\bigg),
\end{align}
where $(i,j)\in\lambda$ means $1\leq i\leq l(\lambda), 1\leq j\leq \lambda_i$,
$c(i,j):=j-i$,
and $s_{\lambda}(\delta_{k,1})$ can be computed by formula \eqref{eqn:schur hook}.

\begin{rmk}
	In Section 2 of \cite{GGN16},
	Goulden, Guay-Paquet and Novak derived the Schur polynomial expansion of generating function of double mixed Hurwitz numbers using the representation theory of symmetric groups.
	The above equation \eqref{eqn:vectau as s} is a special case of theirs by setting $u=0$ and $p_k(A)=\delta_{k,1}$.
\end{rmk}

It is known that the generating function $\vec{\tau}(\hbar;\mathbf{t})$ of monotone Hurwitz numbers is a tau-function of the KP hierarchy,
which can also be derived from the Schur polynomial expansion \eqref{eqn:vectau as s}.
Moreover,
this expansion formula also gives the affine coordinates corresponding to this tau-function as
\begin{lem}\label{lem:affine vectau}
	The affine coordinates of the generating function of monotone Hurwitz numbers $\vec{\tau}(\hbar;\mathbf{t})$ are given by
	\begin{align*}
		\vec{a}_{n,m}
		=\frac{(-1)^n}{(m+n+1)\cdot m!\cdot n!}
		\cdot \prod_{j=-m}^n \frac{1}{1+j\hbar}.
	\end{align*}
\end{lem}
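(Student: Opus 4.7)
The plan is to read off the affine coordinates directly from the Schur expansion \eqref{eqn:vectau as s} via the identification \eqref{eqn:a as c}, specializing the general formula for $c_\lambda$ to the hook partition $\lambda=(m|n)$, i.e.\ $\lambda=(m+1,1^n)$.

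Concretely, I would proceed as follows. First, comparing \eqref{eqn: tau^A} with \eqref{eqn:vectau as s} identifies the Schur coefficient
\[
c_\lambda \;=\; \prod_{(i,j)\in\lambda}\frac{1}{1-c(i,j)\hbar}\cdot s_\lambda(\delta_{k,1}).
\]
Then \eqref{eqn:a as c} gives $\vec a_{n,m}=(-1)^n c_{(m|n)}$, so the whole task is to evaluate the two factors above on the hook $\lambda=(m|n)$.

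Next, I would compute the content product. The boxes of the hook $(m+1,1^n)$ are the first row $(1,1),(1,2),\ldots,(1,m+1)$, with contents $c(1,j)=j-1$ running through $0,1,\ldots,m$, together with the boxes $(2,1),\ldots,(n+1,1)$ in the first column, with contents $-1,-2,\ldots,-n$. Thus the multiset of contents is $\{-n,-n+1,\ldots,m\}$, giving
\[
\prod_{(i,j)\in\lambda}\bigl(1-c(i,j)\hbar\bigr)
\;=\;\prod_{c=-n}^{m}(1-c\hbar)
\;=\;\prod_{j=-m}^{n}(1+j\hbar),
\]
after the substitution $j=-c$. For the hook-length factor I would use \eqref{eqn:schur hook}: the arm/leg structure of a hook yields $h(1,1)=m+n+1$, the remaining hooks in row $1$ are $m,m-1,\ldots,1$, and the remaining hooks in column $1$ are $n,n-1,\ldots,1$, so
\[
\prod_{(i,j)\in\lambda} h(i,j) \;=\; (m+n+1)\cdot m!\cdot n!,
\qquad s_{(m|n)}(\delta_{k,1})=\frac{1}{(m+n+1)\,m!\,n!}.
\]

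Finally, I would assemble these two computations and multiply by $(-1)^n$ to arrive at the claimed formula for $\vec a_{n,m}$. There is essentially no obstacle here beyond careful bookkeeping: the substantive input (KP integrability and the Schur expansion \eqref{eqn:vectau as s}) is already quoted, and the remaining work is the elementary identification of contents and hook lengths for hook-shaped Young diagrams together with the sign convention in \eqref{eqn:a as c}. The only place where a small slip is possible is aligning the indexing conventions of the Frobenius notation $(m|n)$ with the shape $(m+1,1^n)$, which is why I would explicitly list the boxes in the two steps above.
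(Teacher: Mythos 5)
Your proposal is correct and follows essentially the same route as the paper: it reads $\vec a_{n,m}=(-1)^n c_{(m|n)}$ off the Schur expansion \eqref{eqn:vectau as s} via \eqref{eqn:a as c}, with the content product over the hook $(m|n)$ giving $\prod_{j=-m}^n(1+j\hbar)$ and the hook-length formula \eqref{eqn:schur hook} giving $\frac{1}{(m+n+1)\,m!\,n!}$. The only difference is that you spell out the content and hook-length bookkeeping that the paper states without detail, and your bookkeeping is accurate.
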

\begin{proof}
For any $m, n\in\mathbb{Z}_{\geq0}$,
the coefficient of $s_{(m|n)}(\mathbf{t})$ in the Schur polynomial expansion \eqref{eqn:vectau as s} is
\begin{align*}
	\prod_{(i,j)\in(m|n)}\frac{1}{1-c(i,j)\cdot \hbar}
	\cdot s_{(m|n)}(\delta_{k,1})
	=\prod_{j=-m}^n \frac{1}{1+j\hbar}
	\cdot \frac{1}{(m+n+1)\cdot m!\cdot n!}.
\end{align*}
Then, following equation \eqref{eqn:a as c},
the affine coordinate $\vec{a}_{n,m}$ of the tau-function $\vec{\tau}(\hbar;\mathbf{t})$ is the product of $(-1)^n$ and above equation.
\end{proof}

When studying the connected monotone Hurwitz numbers via the formulas \eqref{eqn:conn n=1} and \eqref{eqn:conn n},
we need the generating function $\vec{A}(z,w)$ of affine coordinates corresponding to $\vec{\tau}(\hbar;\mathbf{t})$,
which is defined in equation \eqref{eqn:A as a}.
In this case,
its explicit formula is
\begin{align}\label{eqn:A for mono}
	\vec{A}(z,w)
	=\sum_{n,m\geq0} \bigg(\frac{(-1)^n}{(m+n+1)\cdot m!\cdot n!}
	\cdot \prod_{j=-m}^n \frac{1}{1+j\hbar}
	\cdot z^{-n-1} w^{-m-1}\bigg).
\end{align}

\subsection{The structure of monotone Hurwitz numbers with varying genus}
\label{sec:mono pf}
In this subsection,
we use the KP integrability of $\vec{\tau}(\hbar;\mathbf{t})$ to study the structure of connected monotone Hurwitz numbers
and prove Theorem \ref{thm:main thm for mono}.

\begin{thm}[= Theorem \ref{thm:main thm for mono}]
	\label{thm:thm for mono}
	For the connected monotone Hurwitz numbers,
	we fix the degree $d\geq2$ and the ramification type $\mu=(\mu_1,...,\mu_l)$, which is a partition of $d$.
	Then for arbitrary genus $g$,
	we have
	\begin{align}
		\mu_1\cdots\mu_l\cdot\vec{H}_{g;\mu}
		=\sum_{k=1}^{d-1}
		\sum_{1\leq i\leq \lfloor (d-1)/k\rfloor}
		\Big(\vec{C}(\mu;k,i)\cdot (2g-2+d+l)^{i-1} \cdot k^{2g-2+d+l}\Big),
	\end{align}
	where the coefficients $\vec{C}(\mu;k,i)\in\mathbb{Q}$ with $1\leq k\leq d-1, 1\leq i\leq \lfloor (d-1)/k\rfloor$ are independent of the genus $g$.
\end{thm}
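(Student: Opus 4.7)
The strategy is to exploit the KP integrability of $\vec{\tau}(\hbar;\mathbf{t})$ established via \eqref{eqn:vectau as s}, using the connected $n$-point function formulas \eqref{eqn:conn n=1} and \eqref{eqn:conn n} to rewrite $\mu_1\cdots\mu_l\cdot\vec{H}_{g;\mu}$ as the coefficient of $\hbar^{2g-2+d+l}$ in an explicit rational function in $\hbar$, and then to read off the asserted structure from its partial fraction decomposition.

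\emph{Reduction to an $\hbar$-coefficient.} From the definition \eqref{eqn:def vectau} of $\vec{\tau}$, together with $|\mathrm{Aut}(\mu)|=\prod_j m_j(\mu)!$, a direct computation yields
\begin{equation*}
\frac{\partial^l \log \vec{\tau}(\hbar;\mathbf{t})}{\partial t_{\mu_1}\cdots \partial t_{\mu_l}}\bigg|_{\mathbf{t}=0}
= \mu_1\cdots\mu_l\,\sum_{g\geq 0}\hbar^{2g-2+d+l}\,\vec{H}_{g;\mu}.
\end{equation*}
Applying the $l$-point function formula of Subsection~\ref{sec:KP} and extracting the coefficient $[z_1^{-\mu_1-1}\cdots z_l^{-\mu_l-1}]$ expresses the left-hand side as a finite sum, ranging over $l$-cycles $\sigma$ and admissible index tuples $(n_e,m_e)$ along the edges of $\sigma$, of products $\prod_e \vec{a}_{n_e,m_e}(\hbar)$ (together with an elementary correction from the $\delta_{n,2}$ term when $l=2$). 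The extraction imposes the cyclic constraints $n_i+m_{i-1}=\mu_i-1$, so in particular $\sum_e n_e+\sum_e m_e=d-l$. By Lemma~\ref{lem:affine vectau}, each factor $\vec{a}_{n,m}$ is rational in $\hbar$ with simple poles exactly at $\hbar=1/k$ for $k\in\{-n,\dots,-1,1,\dots,m\}$, so the entire rational function has poles only in the set $\{1/k:k\in\mathbb{Z},\,1\leq|k|\leq d-1\}$.

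\emph{Partial fractions and coefficient extraction.} The partial fraction decomposition takes the form $\sum_{1\leq|k|\leq d-1}\sum_{i\geq1}c_{k,i}(\mu)\,(1-k\hbar)^{-i}$, and the identity $[\hbar^{b}](1-k\hbar)^{-i}=\binom{b+i-1}{i-1}k^{b}$ with $b:=2g-2+d+l$ then produces, for each $(k,i)$, a term that is a polynomial in $b$ of degree $i-1$ times $k^{b}$. Since $\vec{H}_{g;\mu}$ is supported only on $b\equiv d+l\pmod{2}$, the contribution from $-k$ equals $(-1)^{d+l}$ times that from $k$, so each pair $(k,-k)$ combines into a single term of the shape $(\text{polynomial in }b)\cdot k^{b}$ with $k>0$, producing exactly the form claimed in Theorem~\ref{thm:thm for mono}.

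\emph{Main obstacle: the pole-order bound.} The remaining and most delicate point is the sharpened range $1\leq i\leq\lfloor(d-1)/k\rfloor$. The order of the pole at $\hbar=1/k$ in a fixed product $\prod_e\vec{a}_{n_e,m_e}$ equals $\#\{e:m_e\geq k\}$, and taking finite sums cannot raise the pole order above that of any summand. The estimate $\sum_e m_e\leq d-l$ forces $\#\{e:m_e\geq k\}\leq\lfloor(d-l)/k\rfloor\leq\lfloor(d-1)/k\rfloor$, matching the bound on $i$ as stated. Carrying out this bookkeeping uniformly across the sum over $l$-cycles $\sigma$, reassembling the correction from the $\delta_{n,2}$ term when $l=2$, and verifying that the final partial-fraction coefficients $\vec{C}(\mu;k,i)$ indeed lie in $\mathbb{Q}$ is where the main technical work lies.
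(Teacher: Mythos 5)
Your proposal is correct and follows essentially the same route as the paper: KP integrability plus the connected $n$-point function built from the affine coordinates of Lemma \ref{lem:affine vectau}, identification of the $\hbar$-generating series as a rational function with poles only at $\hbar=1/k$ for $1\leq|k|\leq d-1$, the pole-order bound $\lfloor(d-1)/|k|\rfloor$ coming from the $z$-degree constraint, and then partial fractions plus the parity argument and coefficient extraction. The only slight imprecision is that each $\widehat{A}$ also contributes the $\hbar$-independent part $i_{z,w}\tfrac{1}{z-w}$, so in general only a subset $I\subseteq[l]$ of the edges carries $\vec{a}$-factors and the constraint is $\sum_{i\in I}(m_i+n_i+1)=d$ rather than $\sum_e(m_e+n_e)=d-l$; since this still yields $\sum_{i\in I}m_i\leq d-1$, your pole-order bound and hence the whole argument go through unchanged (and the $\delta_{n,2}$ correction drops out because it has no negative powers of $z_2$).
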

\begin{proof}
We first prove the $l(\mu)=1$ case where $\mu=(d)$.
To this end,
we consider the following generating function of connected monotone Hurwitz numbers labeled by fixed partition $\mu=(d)$ and arbitrary genus, 
\begin{align*}
	d\cdot \sum_{g=0}^\infty 
	\big(\hbar^{2g-1+d} \cdot \vec{H}_{g;(d)}\big)
	=&[z^{-d-1}]
	\ \sum_{i=1}^\infty
	\bigg(z^{-i-1}
	\cdot \frac{\partial \log\vec{\tau}(\hbar;\mathbf{t})}{\partial t_i} |_{\mathbf{t}=0}\bigg),
\end{align*}
where the tau-function $\vec{\tau}(\hbar;\mathbf{t})$ is defined in equation \eqref{eqn:def vectau},
and the notation $[z^{-d-1}]$ means we take the coefficient of $z^{-d-1}$ in corresponding expression.
Recall that the generating function of affine coordinates $\vec{A}(z,w)$ corresponding to $\vec{\tau}(\hbar;\mathbf{t})$ is explicitly given in equation \eqref{eqn:A for mono}.
Then, we can use the formula \eqref{eqn:conn n=1} of connected one-point function to compute the above equation.
The result is
\begin{align}\label{eqn:vecH1 as }
	\begin{split}
	d\cdot \sum_{g=0}^\infty 
	\big(\hbar^{2g-1+d} \cdot \vec{H}_{g;(d)}\big)
	=&[z^{-d-1}]\ A(z,z)\\
	=&\sum_{n,m\geq0 \atop n+m=d-1} 
	\bigg(\frac{(-1)^n}{(m+n+1)\cdot m!\cdot n!}
	\cdot \prod_{j=-m}^n \frac{1}{1+j\hbar}\bigg).
	\end{split}
\end{align}
One can notice that,
when considering as a function of $\hbar$,
the right-hand side of above equation is a rational function, and on the whole extended complex plane $\overline{\mathbb{C}}=\mathbb{C}\cup\{\infty\}$ it has only simple poles at $\{-\frac{1}{d-1}, -\frac{1}{d-2},..., -1, 1,...,\frac{1}{d-1}\}$.
Thus,
it can be decomposed into several simple fractions as
\begin{align}\label{eqn:vecH1 as D}
	d\cdot \sum_{g=0}^\infty 
	\big(\hbar^{2g-1+d} \cdot \vec{H}_{g;(d)}\big)
	=\sum_{k=-d+1}^{d-1}
	\frac{\vec{D}((d);k)}{1-k\hbar}.
\end{align}
for some coefficients $\vec{D}((d);k)\in\mathbb{Q}, -d+1\leq k\leq d-1$.
We now analyze the properties of the above equation.
First,
the left-hand side of equation \eqref{eqn:vecH1 as D} is an odd function if $d$ is even, and is an even function if $d$ is odd.
Thus,
we have
\begin{align*}
	\vec{D}((d);k)
	=(-1)^{d-1} \cdot \vec{D}((d);-k).
\end{align*}
Then,
equation \eqref{eqn:vecH1 as D} reduces to
\begin{align*}
	d\cdot \sum_{g=0}^\infty 
	\big(\hbar^{2g-1+d} \cdot \vec{H}_{g;(d)}\big)
	=\begin{cases}
		\vec{D}(\mu;0)
		+2\sum_{k=1}^{d-1}
		\frac{\vec{D}((d);k)}{1-k^2\hbar^2},
		& \text{\ if\ } d \text{\ is\ odd\ },\\
		2\sum_{k=1}^{d-1}
		\frac{k\hbar\cdot \vec{D}((d);k)}{1-k^2\hbar^2},
		& \text{\ if\ } d \text{\ is\ even\ }.
	\end{cases}
\end{align*}
By taking Taylor expansion of the right-hand side of above equation,
and taking the coefficient of $\hbar^{2g-1+d}$,
we obtain
\begin{align*}
	d\cdot \vec{H}_{g;(d)}
	=\sum_{k=1}^{d-1}
	\Big( \vec{C}((d);k) \cdot k^{2g-1+d}\Big)
\end{align*}
for $d\geq2$,
where the coefficients $\vec{C}((d);k)\in\mathbb{Q}, 1\leq k\leq d-1$ are given by
\begin{align}\label{eqn:vecC ad vecD 1}
	\vec{C}((d);k)
	=2\cdot \vec{D}((d);k).
\end{align}
For keeping compatible with notations in the general $l=l(\mu)\geq2$ case,
we denote $\vec{C}((d);k)$ by $\vec{C}((d);k,1)$.
We thus obtain the proof of the theorem for this $l=l(\mu)=1$ case.

Next,
we deal with the general $l=l(\mu)\geq2$ case.
We still consider the following generating function of connected monotone Hurwitz numbers labeled by fixed $\mu$ and arbitrary $g$,
\begin{align*}
	\mu_1\cdots\mu_l\cdot\sum_{g=0}^\infty
	&\big(\hbar^{2g-2+d+l} \cdot \vec{H}_{g;\mu}\big)\\
	&=[z_1^{-\mu_1-1}\cdots z_{l}^{-\mu_{l}-1}]
	\ \sum_{j_1,\cdots,j_l\geq 1}
	\bigg(\frac{\partial^l \log \vec{\tau}(\hbar;\mathbf{t})}{\partial t_{j_1} \cdots \partial t_{j_l}} \Big|_{\mathbf{t}=0}
	\cdot z_1^{-j_1-1} \cdots z_l^{-j_l-1}\bigg).
\end{align*}
In this case,
we need to use the formula \eqref{eqn:conn n} of connected $n$-point function to compute the right-hand side of above equation.
Then we have
\begin{align}\label{eqn:vecHl as AA}
	\begin{split}
	\mu_1\cdots\mu_l\cdot\sum_{g=0}^\infty
	&\big(\hbar^{2g-2+d+l} \cdot \vec{H}_{g;\mu}\big)\\
	&=(-1)^{l-1}
	\cdot [z_1^{-\mu_1-1}\cdots z_{l}^{-\mu_{l}-1}]
	\ \sum_{\text{$l$-cycles } \sigma}
	\Big( \prod_{i=1}^l \widehat{\vec{A}} (z_{\sigma^i(1),\sigma^{i+1}(1)}) \Big),
	\end{split}
\end{align}
where $\widehat{\vec{A}}(z_i,z_j)$ in this case is explicitly given by
\begin{align}\label{eqn:vecH hat{A} formula}
	\widehat{\vec{A}}(z_i,z_j) =
	\begin{cases}
		\sum\limits_{h\geq 0} z_i^{-1-h}z_j^h
		+\sum\limits_{n,m\geq0} \Big(\frac{(-1)^{n}\cdot \prod_{j=-m}^n \frac{1}{1+j\hbar}}{(m+n+1)\cdot m!\cdot n!}
		\cdot z_i^{-n-1} z_j^{-m-1}\Big),
		& \text{if}\ i<j,\\
		-\sum\limits_{h\geq 0} z_j^{-1-h}z_i^h
		+\sum\limits_{n,m\geq0} \Big(\frac{(-1)^{n}\cdot \prod_{j=-m}^n \frac{1}{1+j\hbar}}{(m+n+1)\cdot m!\cdot n!}
		\cdot z_i^{-n-1} z_j^{-m-1}\Big),
		& \text{if}\ i>j.
	\end{cases}
\end{align}
In the above equation,
we have used formula \eqref{eqn:A for mono} for the generating function of affine coordinates $\vec{A}(z,w)$ corresponding to the tau-function $\vec{\tau}(\hbar;\mathbf{t})$.

Similar to the $l=1$ case,
one can notice that,
the right-hand side of equation \eqref{eqn:vecHl as AA},
considered as a function of $\hbar$,
is a rational function and on the whole extended complex plane $\overline{\mathbb{C}}$ it can at most have poles at $\{-\frac{1}{d-1}, -\frac{1}{d-2},..., -1, 1,...,\frac{1}{d-1}\}$.
Moreover,
for each $k=\pm1,...,\pm (d-1)$,
if we denote $N_k\in\mathbb{Z}_{\geq0}$ the order of pole at $\hbar=\frac{1}{k}$ of this rational function,
since we need to take the coefficient of $z_1^{-\mu_1-1}\cdots z_{l}^{-\mu_{l}-1}$ in the right-hand side of equation \eqref{eqn:vecHl as AA},
we must have
\begin{align*}
	|k|\cdot N_k \leq (d-1), \text{\ for\ all\ } k=-d+1,...,-1,1,...,d-1.
\end{align*}
For example,
we have $N_{d-1}\leq1$.
Thus, the right-hand side of equation \eqref{eqn:vecHl as AA} can be decomposed to a linear combination of $\frac{1}{(1-k\hbar)^i}$ with $1\leq |k|\leq d-1, 1\leq i\leq N_k$ plus a constant term.
More precisely, there are some rational numbers $\vec{D}(\mu;0)$ and $\vec{D}(\mu;k,i)$ with $1\leq |k|\leq d-1, 1\leq i\leq N_k$ such that
\begin{align}\label{eqn:vecHl as D}
	\mu_1\cdots\mu_l\cdot \sum_{g=0}^\infty
	\big(\hbar^{2g-2+d+l} \cdot \vec{H}_{g;\mu}\big)
	=\vec{D}(\mu;0)
	+\sum_{-d+1\leq k\leq d-1, k\neq0 \atop 1\leq i\leq N_k} 
	\frac{\vec{D}(\mu;k,i)}{(1-k\hbar)^i}.
\end{align}
One can notice that the left-hand side of above equation is an odd function if $d+l$ is odd and is an even function if $d+l$ is even.
Thus,
we have
\begin{align*}
	\vec{D}(\mu;k,i)
	=(-1)^{d+l} \cdot \vec{D}(\mu;-k,i).
\end{align*}
By expanding the right-hand side of equation \eqref{eqn:vecHl as D} and taking the coefficient of $\hbar^{2g-2+d+l}$ with $d\geq2$,
we obtain,
\begin{align}\label{eqn:vecH last step}
	\mu_1\cdots\mu_l\cdot \vec{H}_{g;\mu}
	=\sum_{k=1}^{d-1}
	\sum_{1\leq i\leq \lfloor (d-1)/k\rfloor}
	\Big(\vec{C}(\mu;k,i)\cdot (2g-2+d+l)^{i-1} \cdot k^{2g-2+d+l}\Big).
\end{align}
In the above equation,
the coefficients $\vec{C}(\mu;k,i)$ are obtained from $\vec{D}(\mu;k,i)$ as,
for any $\mu$ and $k$,
\begin{align}\label{eqn:vecC as vecDl}
	\sum_{1\leq i\leq \lfloor (d-1)/k\rfloor}
	\Big(\vec{C}(\mu;k,i) \cdot x^{i-1}\Big)
	=2\sum_{1\leq i\leq \lfloor (d-1)/k\rfloor}
	\bigg(\vec{D}(\mu;k,i) \cdot \frac{\prod_{s=1}^{i-1}(x+s)}{(i-1)!}\bigg),
\end{align}
where $x$ is an indeterminate, and the above equation holds as an equality between two polynomials of $x$.
Thus,
equation \eqref{eqn:vecH last step} has proved the theorem for this $l=l(\mu)\geq2$ case.
\end{proof}

\begin{rmk}
	In the above proof,
	we denote $N_k$ the order of pole at $\hbar=\frac{1}{k}$ of the rational function in equation \eqref{eqn:vecHl as AA}.
	These numbers are meaningful. For example, if $i>N_k$,
	we have $\vec{C}(\mu;k,i)=0$.
	In the above theorem,
	we take a naive upper bound of these numbers $N_k\leq\frac{d-1}{|k|}$.
	There are some better upper bounds for these numbers $N_k$.
	For example,
	since
	in the right-hand side of equation \eqref{eqn:vecHl as AA} there are total of $l=l(\mu)$ functions $\hat{\vec{A}}(\cdot,\cdot)$,
	we must have $N_k\leq l$ for all $k$.
	This upper bound shows $\vec{C}(\mu;k,i)=0$ for all $i>l=l(\mu)$.
\end{rmk}

\begin{ex}
We provide two examples of the $l=l(\mu)=1$ case.
When $\mu=(5)$,
we have
\begin{align*}
	5\cdot \sum_{g=0}^\infty \big(\hbar^{2g+4} \cdot \vec{H}_{g;(5)}\big)
	=&\frac{14 \cdot \hbar^{4}}
	{\prod_{i=1}^4 (1-i^2 \cdot \hbar^2)}.
\end{align*}
Thus,
after taking the coefficient of $\hbar^{2g+4}$ in both sides of above equation,
we obtain
\begin{align*}
	5\cdot \vec{H}_{g;(5)}
	=\frac{8}{45}\cdot 4^{2g+4}
	-\frac{9}{20}\cdot 3^{2g+4}
	+\frac{14}{45}\cdot 2^{2g+4}
	-\frac{7}{180}\cdot 1^{2g+4}.
\end{align*}
This matches the data in Table 3 of \cite{DHR} up to a global constant $5$.

When $\mu=(10)$,
we have,
\begin{align*}
	10\cdot \sum_{g=0}^\infty \big(\hbar^{2g+9} \cdot \vec{H}_{g;(10)}\big)
	=&\frac{4862 \cdot \hbar^{9}}
	{\prod_{i=1}^9 (1-i^2 \cdot \hbar^2)},
\end{align*}
and
\begin{align*}
	10\cdot \vec{H}_{g;(10)}
	=&\frac{59049}{100352000}\cdot 9^{2g+9}
	-\frac{16384}{4465125}\cdot 8^{2g+9}
	+\frac{14000231}{1492992000}\cdot 7^{2g+9}\\
	&-\frac{153}{12250}\cdot 6^{2g+9}
	+\frac{1328125}{146313216}\cdot 5^{2g+9}
	-\frac{2176}{637875}\cdot 4^{2g+9}\\
	&+\frac{1989}{3584000}\cdot 3^{2g+9}
	-\frac{221}{8930250}\cdot 2^{2g+9}
	+\frac{2431}{36578304000}\cdot 1^{2g+9}.
\end{align*}
\end{ex}

\begin{ex}\label{ex:disprove dhr}
	We provide two examples of the $l=l(\mu)=2$ case.
	The first example that contradicts the prediction of Do--He--Robertson (Conjecture 4.7 in \cite{DHR}) is $\mu=(3,3)$.
	We have
	\begin{align*}
		9\cdot \sum_{g=0}^\infty
		\big(\hbar^{2g+6} \cdot \vec{H}_{g;(3,3)}\big)
		=\frac{60 \hbar^{6} \cdot (264 \hbar^{4}-65 \hbar^{2}+5)}{\prod_{i=3}^5 (1-i^2\cdot \hbar^2)
		\cdot \prod_{i=1}^2 (1-i^2\cdot \hbar^2)^2}.
	\end{align*}
	The above equation, as a function of $\hbar$, has four second order poles at $\hbar=\pm1, \pm \frac{1}{2}$.
	This phenomenon will not appear in the $l(\mu)=1$ or $|\mu|\leq 5$ cases.
	Actually,
	on the one hand,
	when $l(\mu)=1$, the generating function of connected monotone Hurwitz numbers has only simple poles.
	On the other hand, when $|\mu|\leq5$, the corresponding generating function has at most two second-order poles at $\hbar=\pm1$ and simple poles at other points
	(this is why Do, He and Robertson made the prediction as their Conjecture 4.7 in \cite{DHR}).
	After taking the coefficient of $\hbar^{2g+6}$ in both sides of above equation,
	we obtain
	\begin{align*}
		9\cdot \vec{H}_{g;(3,3)}
		=&\frac{125}{1728}\cdot 5^{2g+6}
		-\frac{32}{135}\cdot 4^{2g+6}
		+\frac{81}{320}\cdot 3^{2g+6}\\
		&-\frac{2}{9}\cdot (2g+6)\cdot 2^{2g+6}
		+\frac{92}{135} \cdot 2^{2g+6}
		-\frac{17}{72}\cdot (2g+6)\cdot 1^{2g+6} -\frac{1663}{2160} \cdot 1^{2g+6}.
	\end{align*}
	
	When $\mu=(5,3)$,
	we have
	\begin{align*}
		15\cdot \sum_{g=0}^\infty
		\big(\hbar^{2g+8} \cdot \vec{H}_{g;(5,3)}\big)
		=&\frac{315 \hbar^{8} \cdot (3932 \hbar^{4}-395 \hbar^{2}+15)}
		{\prod_{i=3}^7 (1-i^2\cdot \hbar^2)
			\cdot \prod_{i=1}^2 (1-i^2\cdot \hbar^2)^2},
	\end{align*}
	and
	\begin{align*}
		15 \cdot \vec{H}_{g;(5,3)}=&\frac{16807}{2073600}\cdot 7^{2g+8}
		-\frac{27}{700} \cdot 6^{2g+8}
		+\frac{40625}{580608}\cdot 5^{2g+8}
		-\frac{176}{2025}\cdot 4^{2g+8}
		+\frac{5373}{25600}\cdot 3^{2g+8}\\
		&-\frac{1}{10}\cdot(2g+8)\cdot2^{2g+8}
		-\frac{1013}{11340}\cdot 2^{2g+8}
		-\frac{37}{2880}\cdot(2g+8)\cdot1^{2g+8}
		- \frac{23593}{322560}\cdot1^{2g+8}.
	\end{align*}
\end{ex}

\begin{ex}
	We provide an example of the $l=l(\mu)=3$ case.
	When $\mu=(3,2,1)$,
	we have
	\begin{align*}
		6 \cdot \sum_{g=0}^\infty
		\big(\hbar^{2g+7} \cdot \vec{H}_{g;(3,2,1)}\big)
		=&\frac{240 \hbar^{7} \cdot (230 \hbar^{4}-71 \hbar^{2}+6)}
		{\prod_{i=3}^5 (1-i^2\cdot \hbar^2)
			\cdot \prod_{i=1}^2 (1-i^2\cdot \hbar^2)^2},
	\end{align*}
	and
	\begin{align*}
		6 \cdot \vec{H}_{g;(3,2,1)}
		=&\frac{125}{1728}\cdot 5^{2g+7}
		-\frac{8}{27}\cdot 4^{2g+7}
		+\frac{99}{320}\cdot 3^{2g+7}\\
		&-\frac{2}{9}\cdot (2g+7)\cdot 2^{2g+7}
		+\frac{182}{135} \cdot 2^{2g+7}
		-\frac{55}{72}\cdot (2g+7)\cdot 1^{2g+7} -\frac{43}{27} \cdot 1^{2g+7}.
	\end{align*}
\end{ex}

\begin{rmk}
	Our connected monotone Hurwitz number $\vec{H}_{g;\mu}$ is equal to $\frac{1}{\mu_1\cdots\mu_l}$ times Do--He--Robertson's $\vec{H}_{g;\mu}$ listed in Table 3 of \cite{DHR} from above examples and their data.
\end{rmk}

\subsection{The large genus asymptotics of monotone Hurwitz numbers}
In this subsection,
we derive a formula for the coefficient $\vec{C}(\mu;d-1,1)$,
which gives the large genus asymptotics of connected monotone Hurwitz numbers and proves Proposition \ref{prop:main prop for mono}.

\begin{prop}
	When fixing the degree $d\geq2$ and the ramification type $\mu=(\mu_1,...,\mu_l)$, which is a partition of $d$,
	the connected monotone Hurwitz numbers $\vec{H}_{g,\mu}$ have the following large genus asymptotics
	\begin{align*}
		\mu_1\cdots\mu_l\cdot\vec{H}_{g;\mu}
		=\frac{2\cdot (d-1)^{d-2}}{d!\cdot (d-2)!} \cdot (d-1)^{2g-2+d+l}
		+O\big((d-2)^{2g-2+d+l}\big)
	\end{align*}
	as $g\rightarrow\infty$.
\end{prop}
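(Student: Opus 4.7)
The plan is to invoke Theorem \ref{thm:main thm for mono} to reduce the proposition to a single coefficient computation, and then to extract that coefficient by a direct residue analysis of the Schur expansion \eqref{eqn:vectau as s} of $\vec{\tau}$.

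By Theorem \ref{thm:main thm for mono}, $\mu_1\cdots\mu_l\vec{H}_{g;\mu}$ is the finite sum $\sum_{k,i}\vec{C}(\mu;k,i)(2g-2+d+l)^{i-1}k^{2g-2+d+l}$ with $1\leq k\leq d-1$. Since $\lfloor(d-1)/(d-1)\rfloor = 1$, the term at the dominant base $k = d-1$ is just $\vec{C}(\mu;d-1,1)(d-1)^{2g-2+d+l}$; all remaining summands have $k\leq d-2$ (with at most polynomial-in-$g$ prefactors of bounded degree) and are absorbed into the stated error $O((d-2)^{2g-2+d+l})$. It therefore suffices to prove
$$\vec{C}(\mu;d-1,1) = \frac{2(d-1)^{d-2}}{d!\,(d-2)!}.$$
By \eqref{eqn:vecHl as D} and \eqref{eqn:vecC as vecDl} this equals $2\vec{D}(\mu;d-1,1)$, where
$$\vec{D}(\mu;d-1,1) = \lim_{\hbar\to 1/(d-1)}(1-(d-1)\hbar)\cdot\partial_{t_{\mu_1}}\cdots\partial_{t_{\mu_l}}\log\vec{\tau}(\hbar,\mathbf{t})\big|_{\mathbf{t}=0}.$$

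The key observation is a weighted-degree argument on the Schur expansion. In $\vec{\tau} = 1 + \sum_{\lambda\neq\emptyset}c_\lambda(\hbar)s_\lambda(\mathbf{t})$ with $c_\lambda = s_\lambda(\delta_{k,1})\prod_{(i,j)\in\lambda}(1-c(i,j)\hbar)^{-1}$, the factor $c_\lambda$ has a pole at $\hbar = 1/(d-1)$ iff $\lambda$ contains a cell of content $d-1$, i.e.\ iff $\lambda_1 \geq d$, forcing $|\lambda|\geq d$. Expanding $\log\vec{\tau} = \sum_{n\geq 1}\frac{(-1)^{n-1}}{n}\bigl(\sum_{\lambda\neq\emptyset}c_\lambda s_\lambda\bigr)^n$, each monomial has the form $c_{\lambda^{(1)}}\cdots c_{\lambda^{(n)}}\cdot s_{\lambda^{(1)}}\cdots s_{\lambda^{(n)}}$ of total weighted degree $\sum_r|\lambda^{(r)}|$. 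Only weighted degree $d = \sum_i\mu_i$ survives the action of $\partial_{t_{\mu_1}}\cdots\partial_{t_{\mu_l}}|_{\mathbf{t}=0}$, and since each $|\lambda^{(r)}| \geq 1$, requiring some $|\lambda^{(r)}|\geq d$ forces $n = 1$ and $\lambda^{(1)} = (d)$. Hence the pole at $\hbar = 1/(d-1)$ of the connected $l$-point correlator is contributed by the single summand $c_{(d)}(\hbar)s_{(d)}(\mathbf{t})$, which already manifestly shows that $\vec{C}(\mu;d-1,1)$ is independent of $l$ and of the parts of $\mu$.

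The remaining computation is elementary. By \eqref{eqn:schur hook}, $s_{(d)}(\delta_{k,1}) = 1/d!$, and since the contents of $(d)$ are $0,1,\ldots,d-1$,
$$(1-(d-1)\hbar)\,c_{(d)}(\hbar)\big|_{\hbar = 1/(d-1)} = \frac{1}{d!}\prod_{j=1}^{d-2}\frac{d-1}{d-1-j} = \frac{(d-1)^{d-2}}{d!\,(d-2)!}.$$
For the $\mathbf{t}$-factor, the character expansion $s_{(d)}(\mathbf{t}) = \sum_{|\lambda|=d}z_\lambda^{-1}\prod_i\lambda_i t_{\lambda_i}$ (with $\chi^{(d)}_\lambda = 1$) combined with $z_\mu = \mu_1\cdots\mu_l\cdot|\mathrm{Aut}(\mu)|$ yields $\partial_{t_{\mu_1}}\cdots\partial_{t_{\mu_l}}s_{(d)}(\mathbf{t})|_{\mathbf{t}=0} = 1$. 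Multiplying, $\vec{D}(\mu;d-1,1) = (d-1)^{d-2}/[d!(d-2)!]$ and hence $\vec{C}(\mu;d-1,1) = 2(d-1)^{d-2}/[d!(d-2)!]$, completing the proof. The core conceptual step is the weighted-degree argument isolating $\lambda = (d)$; no cyclic bookkeeping of the $l$-point function \eqref{eqn:vecHl as AA} is needed.
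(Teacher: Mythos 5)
Your proposal is correct, and it reaches the coefficient $\vec{C}(\mu;d-1,1)=\frac{2(d-1)^{d-2}}{d!(d-2)!}$ by a genuinely different route from the paper. The paper stays inside the determinantal $n$-point machinery: it expands the cyclic sum \eqref{eqn:vecHl as AA}, introduces the contributions $\overrightarrow{\text{Cont}}_I$, and shows by degree bookkeeping in the $z_i$'s that the only configuration producing a pole at $\hbar=\tfrac{1}{d-1}$ is $|I|=1$ with $(m_j,n_j)=(d-1,0)$, forced $h$'s, and the single cycle $\sigma=(l,l-1,\dots,1)$, after which the residue gives $\vec{D}(\mu;d-1,1)=\frac{(d-1)^{d-2}}{d!(d-2)!}$. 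You instead work directly with the Schur expansion \eqref{eqn:vectau as s} of $\vec{\tau}$ and the logarithm series: since a pole of $c_\lambda(\hbar)$ at $\hbar=\tfrac{1}{d-1}$ forces a cell of content $d-1$, hence $|\lambda|\geq d$, while the derivatives $\partial_{t_{\mu_1}}\cdots\partial_{t_{\mu_l}}|_{\mathbf{t}=0}$ only see total weighted degree $d$, the pole can only come from the single summand $c_{(d)}s_{(d)}$; the residue and the evaluation $\partial_{t_{\mu_1}}\cdots\partial_{t_{\mu_l}}s_{(d)}|_{\mathbf{t}=0}=1$ (via $\chi^{(d)}_\lambda=1$ and $z_\mu=|\mathrm{Aut}(\mu)|\prod_i\mu_i$) are then one-line computations, and both match the paper's values. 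Your argument buys transparency and uniformity in $l$ (no cycle/subset case analysis, and the $\mu$-independence of the leading coefficient is visible at once), at the cost of stepping outside the framework the paper has already set up and reuses for Theorem \ref{thm:main thm for hurw}; it also makes rationality of the correlator and simplicity of the pole at $\hbar=\tfrac1{d-1}$ immediate, since $(d)$ has exactly one cell of content $d-1$.

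One small point deserves attention: your sentence that all summands with $k\leq d-2$, ``with at most polynomial-in-$g$ prefactors,'' are absorbed into $O\big((d-2)^{2g-2+d+l}\big)$ is not literally true for $k=d-2$ itself, since $(2g-2+d+l)^{i-1}(d-2)^{2g-2+d+l}$ with $i\geq2$ is not $O\big((d-2)^{2g-2+d+l}\big)$. For $d\geq4$ the constraint $i\leq\lfloor(d-1)/(d-2)\rfloor=1$ in Theorem \ref{thm:main thm for mono} already rules this out, and for $d=2$ the error term is void, but for $d=3$ you need $\vec{C}(\mu;1,2)=0$; the paper disposes of this with a ``one can directly verify'' remark, and your own method closes it just as quickly: a pole at $\hbar=\pm\tfrac{1}{d-2}$ requires a first part (or column) of length at least $d-1$, which can occur in at most one factor $c_{\lambda^{(r)}}$ and with a single cell of the relevant content, so that pole is also simple. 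Adding this one sentence makes the error-term absorption airtight.
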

\begin{proof}
For convenience,
about the notation $\vec{C}(\mu;k,i)$ in Theorem \ref{thm:thm for mono},
if $i$ does not satisfy $1\leq i\leq \lfloor(d-1)/k\rfloor$,
we set $\vec{C}(\mu;k,i):=0$.
Then one can directly verify for all $d\geq2$ and any partition $\mu$ of $d$,
we have $\vec{C}(\mu;d-1,i)=\vec{C}(\mu;d-2,i)=0$ for $i\geq2$.
Thus,
from Theorem \ref{thm:thm for mono},
we have
\begin{align*}
	\mu_1\cdots\mu_l\cdot\vec{H}_{g;\mu}
	=\vec{C}(\mu;d-1,1) \cdot (d-1)^{2g-2+d+l}
	+O\big((d-2)^{2g-2+d+l}\big)
\end{align*}
as $g\rightarrow\infty$.
As a consequence,
to prove this proposition,
We only need to compute the coefficient $\vec{C}(\mu;d-1,1)$.

We first prove the $l=l(\mu)=1$ case.
Recall that for convenience,
we have used $\vec{C}((d);k,1)$ and $\vec{D}((d);k,1)$ to represent $\vec{C}((d);k)$ and $\vec{D}((d);k)$, respectively.
From equation \eqref{eqn:vecH1 as },
we have
\begin{align*}
	\vec{D}((d);d-1,1)
	=-(d-1)\cdot \Res_{\hbar=\frac{1}{d-1}}
	\bigg(\frac{1}{d!\cdot \prod_{j=-d+1}^0 (1+j\hbar)}\bigg)
	=\frac{(d-1)^{d-2}}{d!\cdot (d-2)!}.
\end{align*}
Thus, the coefficient $\vec{C}((d);d-1,1)$ can be obtained from equation \eqref{eqn:vecC ad vecD 1} as
\begin{align*}
	\vec{C}((d);d-1,1)
	=2\cdot \vec{D}((d);d-1,1)
	=\frac{2\cdot (d-1)^{d-2}}{d!\cdot (d-2)!},
\end{align*}
which proves the theorem in this case.

We then deal with the $l=l(\mu)\geq2$ case.
Recall that in the proof of Theorem \ref{thm:thm for mono},
we have proved the following formula for the generating function of connected monotone Hurwitz numbers,
\begin{align}\label{eqn:vecHl as AA2}
	\begin{split}
	\mu_1\cdots\mu_l\cdot\sum_{g=0}^\infty
	&\big(\hbar^{2g-2+d+l} \cdot \vec{H}_{g;\mu}\big)\\
	=&(-1)^{l-1}
	\cdot [z_1^{-\mu_1-1}\cdots z_{l}^{-\mu_{l}-1}]
	\ \sum_{\text{$l$-cycles } \sigma}
	\Big( \prod_{i=1}^l \widehat{\vec{A}} (z_{\sigma^i(1)},z_{\sigma^{i+1}(1)}) \Big),
	\end{split}
\end{align}
where $\widehat{\vec{A}}(z_i,z_j)$ is given by
\begin{align}\label{eqn:vecH hat{A} formula2}
	\widehat{\vec{A}}(z_i,z_j) =
	\begin{cases}
		\sum\limits_{h\geq 0} z_i^{-1-h}z_j^h
		+\sum\limits_{n,m\geq0} \Big(\frac{(-1)^{n}\cdot \prod_{j=-m}^n \frac{1}{1+j\hbar}}{(m+n+1)\cdot m!\cdot n!}
		\cdot z_i^{-n-1} z_j^{-m-1}\Big),
		& \text{if}\ i<j,\\
		-\sum\limits_{h\geq 0} z_j^{-1-h}z_i^h
		+\sum\limits_{n,m\geq0} \Big(\frac{(-1)^{n}\cdot \prod_{j=-m}^n \frac{1}{1+j\hbar}}{(m+n+1)\cdot m!\cdot n!}
		\cdot z_i^{-n-1} z_j^{-m-1}\Big),
		& \text{if}\ i>j.
	\end{cases}
\end{align}
For convenience,
from now on,
for the $i$-th $\widehat{\vec{A}}(z_i,z_j)$ in the right-hand side of equation \eqref{eqn:vecHl as AA2},
when referring its explicit formula \eqref{eqn:vecH hat{A} formula2},
we use $h_i, m_i$, and $n_i$ to represent the corresponding indices in the sum.
Then for each subset $I\subseteq[l]=\{1,2,...,l\}$,
we can use the notation $\overrightarrow{\text{Cont}}_{I}$ to represent the contribution from the term involving $(m_i,n_i)$ for $i\in I$ and $(h_i)$ for $i\notin I$.
More precisely,
we consider
\begin{align}\label{eqn:def veccont}
	\begin{split}
	\overrightarrow{\text{Cont}}_{I}
	:=&(-1)^{l-1}
	\cdot [z_1^{-\mu_1-1}\cdots z_{l}^{-\mu_{l}-1}]\\
	&\ \ \sum_{\text{$l$-cycles } \sigma}
	\prod_{i\in I} \bigg(\sum\limits_{n_i,m_i\geq0} \Big(\frac{(-1)^{n_i}\cdot \prod_{j=-m}^n \frac{1}{1+j\hbar}}
	{(m_i+n_i+1)\cdot m_i!\cdot n_i!}
	\cdot z_{\sigma^{i}(1)}^{-n_i-1} z_{\sigma^{i+1}(1)}^{-m_i-1}\Big)\bigg)\\
	&\ \ \cdot\prod_{i\notin I}
	\Big(\delta_{\sigma^{i}(1)<\sigma^{i+1}(1)}
	\cdot\sum\limits_{h_i\geq 0} z_{\sigma^{i}(1)}^{-1-h_i}z_{\sigma^{i+1}(1)}^{h_i}
	-\delta_{\sigma^{i}(1)>\sigma^{i+1}(1)}
	\cdot\sum\limits_{h_i\geq 0} z_{\sigma^{i+1}(1)}^{-1-h_i}z_{\sigma^{i}(1)}^{h_i}\Big),
	\end{split}
\end{align}
where the Dirac symbol $\delta_{i>j}$ is $1$ if $i>j$, and $0$ if $i<j$,
and then
\begin{align*}
	\mu_1\cdots\mu_l\cdot\sum_{g=0}^\infty
	\big(\hbar^{2g-2+d+l} \cdot \vec{H}_{g;\mu}\big)
	=\sum_{I\subseteq [l]} \overrightarrow{\text{Cont}}_{I}.
\end{align*}
For each $I$,
by taking the sum of powers of all $z_i$ in the right-hand side of equation \eqref{eqn:def veccont},
we have that,
if indices $(m_i,n_i)_{i\in I}$ makes a non-zero contribution,
then
\begin{align*}
	\sum_{i\in I} (m_i+n_i+2) + \sum_{i\notin I} 1
	=\sum_{i=1}^l (\mu_i+1) = d+l,
\end{align*}
which is equivalent to the condition $d=\sum_{i\in I} (m_i+n_i+1)$.
Thus,
for the term having a simple pole at $\hbar=\frac{1}{d-1}$,
we must have $|I|=1$ and if we denote $I=\{j\}$ then we must have $(m_j,n_j)=(d-1,0)$.
Then by comparing the power of $z_{\sigma^{j+1}(1)}$ in the right-hand side of equation \eqref{eqn:def veccont},
for $\sigma$ and $(h_i)_{i\notin I}$ which make a non-zero contribution,
we must have
$\sigma^{j+1}(1)>\sigma^{j+2}(1)$ and $h_{j+1}$ satisfies
\begin{align*}
	-m_j-1+h_{j+1}=-\mu_{\sigma^{j+1}(1)}-1,
\end{align*}
where if $j+1>l$ then we denote $h_{j+1}:=h_{j+1-l}$.
By a similar analysis,
we must have
\begin{align}\label{eqn:sigma condition1}
	\sigma^{j+s}(1)>\sigma^{j+s+1}(1), \qquad s=1,...,l-1,
\end{align}
and
\begin{align*}
	h_{j+s}=d-1-\sum_{t=1}^{s}\mu_{\sigma^{j+t}(1)},
	\qquad s=1,...,l-1
\end{align*}
where we denote $h_{j+s}:=h_{j+s-l}$ if $j+s>l$.
Thus,
from equation \eqref{eqn:sigma condition1},
we must have $\sigma^{j+1}(1)=l, \sigma^{j+2}(1)=l-1,\dots, \sigma^{j+l}(1)=1$.
Then $\sigma$ is a $l$-cycle enforces $j=l$ and $\sigma=(l,l-1,...,1)$.
As a consequence,
\begin{align*}
	\overrightarrow{\text{Cont}}_{I}
	=\begin{cases}
		\frac{1}{d! \cdot \prod_{i=-d+1}^0(1+i\hbar)},&\text{\ if \ } I=\{l\} \text{\ for\ some\ }j,\\
		0,&\text{\ otherwise}.
	\end{cases}
\end{align*}
We thus have
\begin{align*}
	\vec{D}(\mu;d-1,1)
	=-(d-1)\cdot \Res_{\hbar=\frac{1}{d-1}}
	\bigg(\sum_{I\subseteq [l]} \overrightarrow{\text{Cont}}_{I}\bigg)
	=\frac{(d-1)^{d-2}}{d!\cdot (d-2)!}.
\end{align*}
Then from equation \eqref{eqn:vecC as vecDl},
the coefficient $\vec{C}(\mu;d-1,1)$ is given by
\begin{align*}
	\vec{C}((d);d-1,1)
	=2\cdot \vec{D}((d);d-1,1)
	=\frac{2\cdot (d-1)^{d-2}}{d!\cdot (d-2)!},
\end{align*}
which proves the theorem for $l\geq2$ case.
\end{proof}

\section{The large genus behavior of simple Hurwitz numbers}
\label{sec:Hurw}
In this section,
by employing the formula of connected $n$-point function for KP tau-function,
we prove Theorem \ref{thm:main thm for hurw},
which gives the structural property and large genus behavior of connected simple Hurwitz numbers.

\subsection{The KP integrability of simple Hurwitz numbers}
In this subsection,
we review the KP integrability of the generating function of ordinary simple Hurwitz numbers.
This is intensively studied in the literature (see, for examples, \cite{KL07, O00}).

It is beneficial to consider the following generating function of connected simple Hurwitz numbers,
\begin{align}\label{eqn:tau as expH}
	\tau(\hbar;\mathbf{t})
	=\exp\Big(
	\sum_{g\geq0 \atop \mu\in\mathcal{P}}
	\frac{\hbar^{2g-2+l(\mu)+d} \cdot H_{g;\mu}} {(2g-2+l(\mu)+d)!\cdot |{\rm Aut}(\mu)|}
	\cdot \prod_{i=1}^{l(\mu)} \big(\mu_i \cdot t_{\mu_i}\big)\Big),
\end{align}
where $|{\rm Aut}(\mu)|:=\prod_{j=1}^\infty m_j!$ if $m_j=\#\{i|\mu_i=j\}$.
This generating function $\tau(\hbar;\mathbf{t})$ can also be represented as a generating function of disconnected simple Hurwitz numbers (see, for example, \cite{GJV}).
In this paper,
we mainly study the structure of connected simple Hurwitz numbers.

From the Riemann--Hurwitz formula \eqref{eqn:RH formula},
the number $2g-2+l(\mu)+d$ appeared in equation \eqref{eqn:tau as expH} equals the number of branch points with simple ramification.
About the term $|{\rm Aut}(\mu)|$ in the denominator of above equation,
it comes from our definition for Hurwitz numbers, and see Remark \ref{rmk:explain autmu} for more details.

It is known that $\tau(\hbar;\mathbf{t})$ is a tau-function of the KP hierarchy (see \cite{O00}).
Moreover,
using representation theory of symmetric groups and equivalent description of Hurwitz numbers in terms of counting constellations,
the generating function $\tau(\hbar;\mathbf{t})$ admits the following formula in terms of Schur polynomials (see, for example, equation (4) in \cite{O00})
\begin{align}
	\tau(\hbar;\mathbf{t})
	=\sum_{\lambda\in\mathcal{P}}
	\exp(\hbar \cdot \kappa_{\lambda}) \cdot s_{\lambda}(\delta_{k,1}) \cdot s_{\lambda}(\mathbf{t}),
\end{align}
where $\kappa_{\lambda}$ is defined in equation \eqref{eqn:def kappa} and $s_{\lambda}(\delta_{k,1})$ can be computed by formula \eqref{eqn:schur hook}.
Thus,
from subsection \ref{sec:KP},
the affine coordinates corresponding to this tau-function $\tau(\hbar;\mathbf{t})$ are given by (see also \cite{KL07,O00})
\begin{align*}
	a_{n,m}
	=&(-1)^n\cdot \exp\Big(\frac{\hbar(m^2+m-n^2-n)}{2}\Big)
	\cdot s_{(m|n)}(\delta_{k,1})\\
	=&\frac{(-1)^{n}\cdot \exp\Big(\frac{\hbar(m^2+m-n^2-n)}{2}\Big)}{(m+n+1)\cdot m!\cdot n!}.
\end{align*}
When studying the expansion coefficients of $\log \tau(\hbar;\mathbf{t})$ using formulas \eqref{eqn:conn n=1} and \eqref{eqn:conn n},
we need the generating function of affine coordinates $A(z,w)$ defined in equation \eqref{eqn:A as a}.
Thus,
corresponding to the generating function $\tau(\hbar;\mathbf{t})$ of simple Hurwitz numbers, 
we have
\begin{align}\label{eqn:A for Hurw}
	A(z,w)
	=\sum_{n,m\geq0} \bigg(\frac{(-1)^{n}\cdot \exp\Big(\frac{\hbar(m^2+m-n^2-n)}{2}\Big)}{(m+n+1)\cdot m!\cdot n!}
	 \cdot z^{-n-1} w^{-m-1}\bigg).
\end{align}

\subsection{The structural property and large genus behavior of simple Hurwitz numbers}
In this subsection,
we use the KP integrability of $\tau(\hbar;\mathbf{t})$ to study the connected simple Hurwitz numbers,
which proves Theorem \ref{thm:main thm for hurw}.

\begin{thm}[=Theorem \ref{thm:main thm for hurw}]
	For the connected simple Hurwitz numbers,
	we fix the degree $d\geq2$ and the ramification type $\mu=(\mu_1,...,\mu_{l})$,
	which is a partition of $d$.
	Then there are some integers $C(\mu,k)\in\mathbb{Z}, 1\leq k\leq \binom{d}{2}$ such that,
	for arbitrary genus $g\geq0$,
	\begin{align}\label{eqn:H as C}
		H_{g;\mu}
		=\frac{2}{d! \cdot \mu_1 \cdots \mu_{l}}
		\cdot \sum_{k=1}^{\binom{d}{2}}
		\big(C(\mu;k) \cdot k^{d+2g-2+l}\big).
	\end{align}
	Moreover,
	these coefficients satisfy
	$C(\mu;k)=0$ for $\binom{d-1}{2}<k<\binom{d}{2}$
	and
	\begin{align}
		C(\mu;\binom{d}{2})=1,
		\qquad \qquad C(\mu;\binom{d-1}{2})=-d\cdot \#\{i\ |\ \mu_i=1\}.
	\end{align}
	Thus,
	for fixed $\mu$,
	as $g\rightarrow\infty$,
	\begin{align*}
		H_{g;\mu}
		=\frac{2}{d!\cdot \mu_1\cdots \mu_l}
		\cdot\binom{d}{2}^{d+2g-2+l}
		-&\frac{2\cdot \#\{i\ |\ \mu_i=1\}}{(d-1)!\cdot \mu_1\cdots \mu_l}\cdot\binom{d-1}{2}^{d+2g-2+l}\\
		&\qquad+O\Bigg(\bigg(\binom{d-1}{2}-1\bigg)^{d+2g-2+l}\Bigg).
	\end{align*}
\end{thm}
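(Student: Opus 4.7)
My strategy is to adapt the KP-integrability argument of Section \ref{sec:mono} to the tau-function $\tau(\hbar;\mathbf{t})$ of simple Hurwitz numbers. First I apply the connected $l$-point function formulas \eqref{eqn:conn n=1}--\eqref{eqn:conn n} with the affine generating function $A(z,w)$ of \eqref{eqn:A for Hurw}, splitting each $\widehat A$ factor into its diagonal and affine parts indexed by a subset $I\subseteq[l]$ exactly as in \eqref{eqn:def veccont}. Because every affine coordinate of $\tau(\hbar;\mathbf{t})$ depends on $\hbar$ only through the exponential $\exp\bigl(\hbar(m^2+m-n^2-n)/2\bigr)$, and the $z$-power constraint forces $\sum_{i\in I}(m_i+n_i)=d-|I|$, extracting the coefficient of $z_1^{-\mu_1-1}\cdots z_l^{-\mu_l-1}$ produces a finite sum $\sum_k c_k(\mu)\,e^{\hbar k}$ with $k$ ranging over integers (the integrality of $k=\tfrac12\sum_{i\in I}(m_i-n_i)(m_i+n_i+1)$ is an easy parity check, since $m_i-n_i$ and $m_i+n_i+1$ have opposite parities). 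The parity $c_{-k}=(-1)^{d+l}c_k$ of the generating function, inherited exactly as in the monotone case, then combines positive and negative $k$ into $\tfrac{2}{d!}\sum_{k\geq 1}C(\mu;k)k^{2g-2+d+l}$ upon extracting the coefficient of $\hbar^{2g-2+d+l}/(2g-2+d+l)!$, giving the structural formula \eqref{eqn:H as C}; the integrality of $C(\mu;k)$ itself is the content of Do--He--Robertson's Theorem 1.5 in \cite{DHR}.

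The vanishing $C(\mu;k)=0$ for $\binom{d-1}{2}<k<\binom{d}{2}$ and the explicit values of the two leading coefficients now reduce to bounding and classifying the exponent $k=\tfrac12\sum_{i\in I}(m_i-n_i)(m_i+n_i+1)$ under $\sum_{i\in I}(m_i+n_i)=d-|I|$. Writing $a_i=m_i+n_i$ and $b_i=m_i-n_i\in\{-a_i,-a_i+2,\ldots,a_i\}$, a direct maximization yields: for $|I|=1$, the only value above $\binom{d-1}{2}-1$ is $\binom{d}{2}$, attained uniquely at $(m,n)=(d-1,0)$, and the next candidate $(d-3)d/2=\binom{d-1}{2}-1$ already lies strictly below $\binom{d-1}{2}$; for $|I|=2$, the maximum $\binom{d-1}{2}$ is reached precisely when $\{(m_1,n_1),(m_2,n_2)\}=\{(d-2,0),(0,0)\}$; for $|I|\geq 3$, the estimate $k\leq\binom{d-|I|+1}{2}$ is strictly less than $\binom{d-1}{2}$. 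Hence no admissible tuple contributes an exponent strictly between $\binom{d-1}{2}$ and $\binom{d}{2}$.

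For the leading coefficient, the unique $|I|=1$ configuration with $(m,n)=(d-1,0)$ pins down $\sigma$ and the auxiliary indices $h_i$ uniquely via the $z$-power constraint, exactly as in the endgame of the proof of Proposition \ref{prop:main prop for mono}; the $l-1$ delta-factor signs cancel the $(-1)^{l-1}$ prefactor, giving total contribution $e^{\hbar\binom{d}{2}}/d!$, and combining with its mirror $(m,n)=(0,d-1)$ under the parity symmetrization yields $C(\mu;\binom{d}{2})=1$. For the sub-leading coefficient I enumerate the $|I|=2$ configurations realizing $\{(d-2,0),(0,0)\}$: the $(0,0)$ factor contributes $-1$ to the powers of both incident $z$'s, while the $(d-2,0)$ factor contributes $-d+1$ and $-1$, so the $z$-power bookkeeping (matching $-\mu_\alpha-1$ on each $z_\alpha$) forces the $(0,0)$ factor to be attached at an index $\alpha\in[l]$ with $\mu_\alpha=1$, and then uniquely determines $\sigma$, the partner position of $(d-2,0)$, and the remaining delta indices $h_i$. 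Summing the resulting $-\frac{1}{(d-1)!}e^{\hbar\binom{d-1}{2}}$ over the $\#\{i:\mu_i=1\}$ admissible choices of $\alpha$, and then symmetrizing with the conjugate $(0,d-2)$ configurations, produces $C(\mu;\binom{d-1}{2})=-d\cdot\#\{i:\mu_i=1\}$ and the asymptotic \eqref{eqn:H asym}. I expect the main obstacle to be this last step: the precise sign-and-cycle accounting of the $|I|=2$ contributions, namely verifying that every admissible $(\alpha,\sigma,(h_i))$ combination contributes with the same sign $-\tfrac{1}{(d-1)!}$ and that no admissible configuration has been overlooked.
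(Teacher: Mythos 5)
Your proposal is correct and follows essentially the same route as the paper's own proof: the connected $n$-point formula with the affine coordinates of $\tau(\hbar;\mathbf{t})$, the decomposition over subsets $I\subseteq[l]$ with the constraint $\sum_{i\in I}(m_i+n_i+1)=d$, the maximization of $k=\tfrac12\sum_{i\in I}(m_i-n_i)(m_i+n_i+1)$ by $|I|$, and the enumeration of the extremal configurations $(d-1,0)$ and $\{(d-2,0),(0,0)\}$ (the paper proves integrality of $C(\mu;k)$ directly by a multinomial argument rather than citing \cite{DHR}, and your intermediate displays drop the harmless factor $\mu_1\cdots\mu_l$). The step you flag as the main obstacle is exactly the paper's Case iv) bookkeeping via the nine constraint cases, and it confirms your expectation: for each index with $\mu_i=1$ there is a unique admissible $(\sigma,I,(h_i))$ (with the $(0,d-2)$-type arrangement supplying precisely the $\mu=(1^d)$ case), each contributing $-\tfrac{1}{(d-1)!\,\mu_1\cdots\mu_l}$ at exponent $\binom{d-1}{2}$.
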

\begin{proof}
First,
we deal with the $\mu=(d)$ case.
Applying the formula \eqref{eqn:conn n=1} of connected one-point function to the tau-function $\tau(\hbar;\mathbf{t})$ defined in equation \eqref{eqn:tau as expH},
we obtain
\begin{align*}
	\sum_{g=0}^\infty 
	\frac{\hbar^{2g-1+d}}{(2g-1+d)!} H_{g;(d)}
	=&\frac{1}{d}
	\cdot [z^{-d-1}]
	\ \sum_{i=1}^\infty
	\bigg(z^{-i-1}\cdot \frac{\partial \log\tau(\hbar;\mathbf{t})}{\partial t_i} \big|_{\mathbf{t}=0}\bigg)\\
	=&\frac{1}{d}
	\cdot [z^{-d-1}]\ A(z,z)\\
	=&\frac{1}{d}
	\cdot \sum_{n,m\geq0 \atop n+m=d-1} 
	\frac{(-1)^n 
		\cdot\exp\big(\frac{\hbar(m^2+m-n^2-n)}{2}\big)}
		{(m+n+1)\cdot m!\cdot n!},
\end{align*}
where we have used the formula \eqref{eqn:A for Hurw} for $A(z,w)$.
One can notice that the last line of the above equation is a linear combination of functions $\exp(\hbar k)$ with $-\binom{d}{2}\leq k \leq \binom{d}{2}$.
We thus have the following expression
\begin{align}\label{eqn:H 1point as D}
	\sum_{g=0}^\infty 
	\frac{\hbar^{2g-1+d}}{(2g-1+d)!} H_{g;(d)}
	=&\sum_{k=-\binom{d}{2}}^{\binom{d}{2}}
	\big(D((d);k) \cdot \exp(\hbar k)\big),
\end{align}
where the coefficients $D((d);k)$ are given by
\begin{align}\label{eqn:D1}
	D((d);k)
	=\frac{1}{d}
	\cdot \sum_{n,m\geq0, n+m=d-1\atop m^2+m-n^2-n=2k} 
	\frac{(-1)^n}{(m+n+1)\cdot m!\cdot n!}.
\end{align}
The left-hand side of equation \eqref{eqn:H 1point as D},
as a function of $\hbar$,
is an odd function if $d$ is even and an even function if $d$ is odd.
Thus, the coefficients $D((d);k)$ must satisfy the following condition
\begin{align}\label{eqn:D=+-D 1}
	D((d);k)=(-1)^{d-1} \cdot D((d);-k)
\end{align}
for all possible $k$.

By taking Taylor expansion and taking the coefficient of $\hbar^{2g-1+d}$ in both sides of equation \eqref{eqn:H 1point as D} with the condition $d\geq2$,
we obtain a formula for the connected simple Hurwitz number as
\begin{align*}
	H_{g;(d)}
	=\frac{2}{d! \cdot d}
	\cdot \sum_{k=1}^{\binom{d}{2}}
	\big(C((d);k) \cdot k^{2g-1+d}\big),
\end{align*}
where the coefficients $C((d);k)$ are obtained from $D((d);k)$ in terms of
\begin{align*}
	C((d),k)
	=\frac{d!\cdot d}{2} \cdot \Big(D((d),k)+(-1)^{2g-1+d}D((d),-k)\Big)
	=d!\cdot d \cdot D((d),k).
\end{align*}
In the second equal sign of above equation, we have used the equation \eqref{eqn:D=+-D 1}.
From equation \eqref{eqn:D1} computing $D((d);k)$,
we know $C((d),k)\in\mathbb{Z}$ since $\frac{d! \cdot d}{d^2 \cdot m!\cdot n!}$ is always an integer with the conditions $m, n\geq0, m+n=d-1$.
 
Moreover,
from equation \eqref{eqn:D1},
if $k=\binom{d}{2}$,
then the possible $(m,n)$ in the summation which contributes to $D((d),k)$ is $(m,n)=(d-1,0)$.
Then we have
\begin{align*}
	C((d);\binom{d}{2})=d!\cdot d \cdot D((d),k)=1.
\end{align*}
If $\binom{d-1}{2}\leq k<\binom{d}{2}$,
then there is no $(m, n)$ in the summation, which can make a non-zero contribution to the right-hand side of equation \eqref{eqn:D1}.
Thus
$$C((d);k)=d!\cdot d \cdot D((d),k)=0,
\quad \binom{d-1}{2}\leq k<\binom{d}{2}.$$
The above discussion proves the theorem in this case.

Next,
we deal with the general $l=l(\mu)\geq2$ case.
From definition \eqref{eqn:tau as expH} of the tau-function $\tau(\hbar;\mathbf{t})$,
we have,
\begin{align*}
	\sum_{g=0}^\infty 
	\frac{\hbar^{2g-2+d+l}}{(2g-2+d+l)!}
	&H_{g;\mu}\\
	=\frac{1}{\mu_1\cdots \mu_l}
	&\cdot [z_1^{-\mu_1-1}\cdots z_{l}^{-\mu_{l}-1}]
	\ \sum_{j_1,\cdots,j_l\geq 1}
	\bigg(\frac{\partial^l \log \tau(\hbar;\mathbf{t})}{\partial t_{j_1} \cdots \partial t_{j_l}} \Big|_{\mathbf{t}=0}
	\cdot z_1^{-j_1-1} \cdots z_l^{-j_l-1}\bigg).
\end{align*}
One can then use the formula \eqref{eqn:conn n} of connected $n$-point function to compute the right-hand side of above equation to obtain
\begin{align}\label{eqn:Hl as AA}
	\begin{split}
	\sum_{g=0}^\infty 
	\frac{\hbar^{2g-2+d+l}}{(2g-2+d+l)!}
	&H_{g;\mu}\\
	=\frac{(-1)^{l-1}}{\mu_1\cdots\mu_l}
	&\cdot [z_1^{-\mu_1-1}\cdots z_{l}^{-\mu_{l}-1}]
	\ \sum_{\text{$l$-cycles } \sigma}
	\Big( \prod_{i=1}^l \widehat A (z_{\sigma^i(1)},z_{\sigma^{i+1}(1)}) \Big),
	\end{split}
\end{align}
where $\widehat A(z_i,z_j)$ in this case is explicitly given by
\begin{align}\label{eqn:H hat{A} formula}
	\widehat A(z_i,z_j) =
	\begin{cases}
		\sum\limits_{h\geq 0} z_i^{-1-h}z_j^h
		+\sum\limits_{n,m\geq0} \Big(\frac{(-1)^{n}\cdot \exp\big(\frac{\hbar(m^2+m-n^2-n)}{2}\big)}{(m+n+1)\cdot m!\cdot n!}
		\cdot z_i^{-n-1} z_j^{-m-1}\Big),
		& \text{if}\ i<j,\\
		-\sum\limits_{h\geq 0} z_j^{-1-h}z_i^h
		+\sum\limits_{n,m\geq0} \Big(\frac{(-1)^{n}\cdot \exp\big(\frac{\hbar(m^2+m-n^2-n)}{2}\big)}{(m+n+1)\cdot m!\cdot n!}
		\cdot z_i^{-n-1} z_j^{-m-1}\Big),
		& \text{if}\ i>j.
	\end{cases}
\end{align}
For convenience,
for the $i$-th $\widehat A (z_{\sigma^{i}(1)},z_{\sigma^{i+1}(1)})$ in the right-hand side of equation \eqref{eqn:Hl as AA},
when referring its explicit formula \eqref{eqn:H hat{A} formula},
we will use $h_i, m_i, n_i$ to represent their indices in the sum.

Similar to the one-point case,
after expanding the product of all $\widehat A (z_{\sigma^i(1)},z_{\sigma^{i+1}(1)})$,
taking the summation over $\sigma\in\{l-\text{cycles}\}$,
and taking the coefficient of $z_1^{-\mu_1-1}\cdots z_{l}^{-\mu_{l}-1}$,
we know that the right-hand side of equation \eqref{eqn:Hl as AA} will be a linear combination of the functions $\exp(\hbar k), k\in\mathbb{Z}$.
Thus,
we can simplify assume the following expansion formula for generating function of $H_{g;\mu}$ as
\begin{align}\label{eqn:H lpoint as D}
	\sum_{g=0}^\infty 
	\frac{\hbar^{2g-2+d+l}}{(2g-2+d+l)!} H_{g;\mu}
	=&\sum_{k\in\mathbb{Z}}
	\big(D(\mu;k) \cdot \exp(\hbar k)\big),
\end{align}
where the coefficients $D(\mu;k)$ are some rational numbers.
Actually,
as we will see,
for fixed partition $\mu$, there are only finitely many $(h_i, m_i, n_i)$ which make a non-zero contribution to the right-hand side of equation \eqref{eqn:Hl as AA}.
Thus $D(\mu;k)\neq0$ only for finitely many $k$.
Below,
we give a detailed analysis on these coefficients $D(\mu;k)$.

By applying the transform $\hbar\rightarrow-\hbar$ to the left-hand side of equation \eqref{eqn:H lpoint as D},
the first property satisfied by $D(\mu;k)$ reads
\begin{align}\label{eqn:Dmuk=-Dmu-k}
	D(\mu;k)=(-1)^{d+l} \cdot D(\mu;-k).
\end{align}
When expanding the right-hand side of equation \eqref{eqn:Hl as AA},
the formula \eqref{eqn:H hat{A} formula} tells $\widehat A(z_{\sigma^{i}(1)},z_{\sigma^{i+1}(1)})$ will contribute two kinds of terms, $\sum_{h_i\geq0}(\cdot)$ or $\sum_{n_i,m_i\geq0}(\cdot)$.
To give a clear description to $D(\mu;k)$,
we need to record which kind of term of $\widehat A(z_{\sigma^{i}(1)},z_{\sigma^{i+1}(1)})$ contributes for each $1\leq i\leq l$.
Thus,
for each $\sigma\in\{l-\text{cycles}\}$ and subset $I$ of $[l]:=\{1,2,...,l\}$,
we use the notation $\text{Cont}_{\sigma,I}$ to denote the contribution form the term,
which involves $(m_i, n_i)$ when $i\in I$,
and involves $h_i$ when $i\notin I$.
More precisely,
\begin{align}\label{eqn:def cont}
	\begin{split}
	\text{Cont}_{\sigma,I}
	:=&\frac{(-1)^{l-1}}{\mu_1\cdots\mu_l}
	\cdot [z_1^{-\mu_1-1}\cdots z_{l}^{-\mu_{l}-1}]\\
	&\ \ \prod_{i\in I} \bigg(\sum\limits_{n_i,m_i\geq0} \Big(\frac{(-1)^{n_i}\cdot \exp\big(\frac{\hbar(m_i^2+m_i-n_i^2-n_i)}{2}\big)}{(m_i+n_i+1)\cdot m_i!\cdot n_i!}
	\cdot z_{\sigma^{i}(1)}^{-n_i-1} z_{\sigma^{i+1}(1)}^{-m_i-1}\Big)\bigg)\\
	&\ \ \cdot\prod_{i\notin I}
	\Big(\delta_{\sigma^{i}(1)<\sigma^{i+1}(1)}
	\cdot\sum\limits_{h_i\geq 0} z_{\sigma^{i}(1)}^{-1-h_i}z_{\sigma^{i+1}(1)}^{h_i}
	-\delta_{\sigma^{i}(1)>\sigma^{i+1}(1)}
	\cdot\sum\limits_{h_i\geq 0} z_{\sigma^{i+1}(1)}^{-1-h_i}z_{\sigma^{i}(1)}^{h_i}\Big),
	\end{split}
\end{align}
where the Dirac symbol $\delta_{i>j}$ is $1$ if $i>j$, and $0$ if $i<j$.
Thus, equation \eqref{eqn:Hl as AA} can be expressed as
\begin{align}\label{eqn:H as cont}
	\sum_{g=0}^\infty 
	\frac{\hbar^{2g-2+d+l}}{(2g-2+d+l)!} H_{g;\mu}
	=&\sum_{\sigma\in\{l-\text{cycles}\} \atop I\subseteq[l]} \text{Cont}_{\sigma,I}.
\end{align}
Moreover,
by a similar idea,
we can decompose those coefficients $D(\mu;k)$ in equation \eqref{eqn:H lpoint as D} to different contributions as
\begin{align*}
	D(\mu;k)
	=\sum_{\sigma\in\{l-\text{cycles}\} \atop I\subseteq[l]} D(\mu;k)_{\sigma,I},
\end{align*}
where $D(\mu;k)_{\sigma,I}$ is the coefficient of $\exp(\hbar k)$ in $\text{Cont}_{\sigma,I}$.

It is time to give a precise description for $D(\mu;k)$.
We first deal with the case of $I=[l]$.
For each given $\sigma\in\{l-\text{cycles}\}$,
if the numbers $(m_i,n_i)_{i\in [l]}$ make a non-zero contribution to $\text{Cont}_{\sigma,[l]}$,
then from equation \eqref{eqn:def cont},
they must satisfy the condition
\begin{align*}
	m_i+n_{i+1}=\mu_{\sigma^{i+1}(1)}-1,
	\quad 1\leq i \leq l,
\end{align*}
where $n_{l+1}:=n_1$.
Recall that the exponent $k$ is determined from $(m_i,n_i)_{i\in [l]}$ by
$$k=\frac{1}{2}\sum_{i=1}^l (m_i^2+m_i-n_i^2-n_i).$$
Thus,
in this case,
the maximal $k$ such that $D(\mu;k)_{\sigma,[l]}\neq0$ satisfies
\begin{align*}
	k\leq\frac{1}{2}\sum_{i=1}^l (m_i^2+m_i)
	\leq\frac{1}{2}\sum_{i=1}^l (\mu_i^2-\mu_i)
	\leq \binom{d-1}{2}.
\end{align*}
Moreover,
if $l=2$ and $\mu_2=1$,
we have $\sigma=(1,2)$ and the maximal $k$ such that $D(\mu;k)_{\sigma,[l]}\neq0$ is $\binom{d-1}{2}$,
which comes from $(m_1=\mu_1-1, m_2=n_1=n_2=0)$.
In addition to this case,
the maximal $k$ such that $D(\mu;k)_{\sigma,[l]}\neq0$ is less than $\binom{d-1}{2}$.
Thus, when $\mu=(d-1,1)$,
we have
\begin{align}\label{eqn:mu=(d-1,1) I=[l]}
	D((d-1,1);k)_{(1,2),[2]}
	=\begin{cases}
		\frac{-1}{\mu_1 \cdot \mu_2 \cdot (m_1+n_1+1) \cdot m_1!}
		=-\frac{1}{(d-1)\cdot (d-1)!},
		&\text{if}\ k=\binom{d-1}{2},\\
		0, &\text{if}\ k>\binom{d-1}{2}. 
	\end{cases}
\end{align}
Otherwise, i.e., $\mu\neq(d-1,1)$,
we have that $D(\mu;k)_{\sigma,[l]}=0$ for all $k\geq\binom{d-1}{2}$.

Similarly,
if $I$ is a proper subset of $[l]$,
from equation \eqref{eqn:Hl as AA},
the condition for $D(\mu;k)_{\sigma,I}\neq0$ will enforce
\begin{align}\label{eqn:k as mn I}
	k=\frac{1}{2}\sum_{i\in I} (m_i^2+m_i-n_i^2-n_i)
\end{align}
and for $1\leq i \leq l$,
\begin{align}\label{eqn:9cases}
	\begin{cases}
		m_i+n_{i+1}=\mu_{\sigma^{i+1}(1)}-1, &\text{if}\ i, i+1\in I,\\
		m_i+h_{i+1}=\mu_{\sigma^{i+1}(1)}-1, &\text{if}\ i\in I, i+1\notin I, \sigma^{i+1}(1)<\sigma^{i+2}(1),\\
		m_i-h_{i+1}=\mu_{\sigma^{i+1}(1)}, &\text{if}\ i\in I, i+1\notin I, \sigma^{i+1}(1)>\sigma^{i+2}(1),\\
		n_{i+1}-h_{i}=\mu_{\sigma^{i+1}(1)}, &\text{if}\ i\notin I, i+1\in I, \sigma^{i}(1)<\sigma^{i+1}(1),\\
		n_{i+1}+h_{i}=\mu_{\sigma^{i+1}(1)}-1, &\text{if}\ i\notin I, i+1\in I, \sigma^{i}(1)>\sigma^{i+1}(1),\\
		%%%% the last four cases
		h_{i+1}-h_{i}=\mu_{\sigma^{i+1}(1)}, &\text{if}\ i\notin I, i+1\notin I, \sigma^{i}(1)<\sigma^{i+1}(1), \sigma^{i+1}(1)<\sigma^{i+2}(1),\\
		-h_{i+1}-h_{i}=\mu_{\sigma^{i+1}(1)}+1, &\text{if}\ i\notin I, i+1\notin I, \sigma^{i}(1)<\sigma^{i+1}(1), \sigma^{i+1}(1)>\sigma^{i+2}(1),\\
		h_{i+1}+h_{i}=\mu_{\sigma^{i+1}(1)}-1, &\text{if}\ i\notin I, i+1\notin I, \sigma^{i}(1)>\sigma^{i+1}(1), \sigma^{i+1}(1)<\sigma^{i+2}(1),\\
		-h_{i+1}+h_{i}=\mu_{\sigma^{i+1}(1)}, &\text{if}\ i\notin I, i+1\notin I, \sigma^{i}(1)>\sigma^{i+1}(1), \sigma^{i+1}(1)>\sigma^{i+2}(1),
	\end{cases}
\end{align}
where if $i=l$, we regard $i+1$ as $1$.
Notice that the $7$-th case in above equation, $``i\notin I, i+1\notin I, \sigma^{i}(1)<\sigma^{i+1}(1), \sigma^{i+1}(1)>\sigma^{i+2}(1)"$, does not contribute since $h_i, h_{i+1}\geq0$ and $-h_{i+1}-h_{i}=\mu_{\sigma^{i+1}(1)}+1$ could not be satisfied simultaneously.
Thus, $I$ cannot be an empty set.
In addition,
for the case whose contribution comes from the subset $I$,
by considering the sum of all exponents of $z_i, i\in [l]$ in both sides of equation \eqref{eqn:Hl as AA},
we have
\begin{align}\label{eqn:|mu|=mn}
	d=\sum_{i\in I} (m_i+n_i+1).
\end{align}
Below,
by using equations \eqref{eqn:k as mn I}, \eqref{eqn:9cases} and \eqref{eqn:|mu|=mn},
we give a precise description for four cases of $k$.

Case i). For $k>\binom{d}{2}$,
from equation \eqref{eqn:|mu|=mn},
we have $\sum_{i\in I}m_i\leq d-1$.
Thus there is no suitable $(m_i,n_i)_{i\in I}$ such that $k=\frac{1}{2}\sum_{i\in I} (m_i^2+m_i-n_i^2-n_i)$ is greater than $\binom{d}{2}$.

Case ii). For $k=\binom{d}{2}$,
since from equation \eqref{eqn:|mu|=mn} we have $\sum_{i\in I}m_i\leq d-|I|$,
the subset $I$ must have only one single element and we denote it by $I=\{j\}$.
Moreover,
from
$k=\frac{1}{2} (m_j^2+m_j-n_j^2-n_j)=\binom{d}{2}$,
we must have $m_j=d-1$ and $n_j=0$.
For the $i=j$-th term in the right-hand side of equation \eqref{eqn:Hl as AA},
since $j\in I, j+1 \notin I$ and $h_{j+1}\geq0$,
the $2$-nd case in equation \eqref{eqn:9cases} cannot be satisfied.
Thus the non-zero contribution must come from the $3$-rd case in equation \eqref{eqn:9cases},
i.e.,
we have $\sigma^{j+1}(1)>\sigma^{j+2}(1)$ and $h_{j+1}=d-\mu_{\sigma^{j+1}(1)}-1$.
When $l=l(\mu)=2$, there is only one $2$-cycle $\sigma=(1,2)$,
and from the condition $\sigma^{j+1}(1)>\sigma^{j+2}(1)$,
we have $\sigma^{j+1}(1)=2$ and $\sigma^{j+2}(1)=1$.
Thus, $\sigma$ is a $2$-cycle gives $j=2$, and $(m_2,n_2)=(d-1,0), h_1=d-\mu_2-1=\mu_1-1$.
The result in this case is
\begin{align}\label{eqn:D l=2}
	D(\mu;k)_{\sigma,I}
	=D((\mu_1,\mu_2);\binom{d}{2})_{(1,2),\{2\}}
	=\frac{-1}{\mu_1 \cdot \mu_2}
	\cdot \frac{1}{d\cdot (d-1)!}\cdot(-1)
	=\frac{1}{d!\cdot \mu_1 \cdot \mu_2}.
\end{align}
When $l=l(\mu)>2$,
for the $i=(j+1)$-th term in the right-hand side of equation \eqref{eqn:Hl as AA},
since we already know $j+1, j+2\notin I, \sigma^{j+1}(1)>\sigma^{j+2}(1)$ and $h_{j+2}\geq0$,
the non-zero contribution must come from the $9$-th case in equation \eqref{eqn:9cases}.
Thus, we must have $\sigma^{j+2}(1)>\sigma^{j+3}(1)$
and $h_{j+2}=d-\mu_{\sigma^{j+1}(1)}-\mu_{\sigma^{j+2}(1)}-1$.
By a similar analysis,
we have,
the $l$-cycle $\sigma$ must satisfy
\begin{align}\label{eqn:sigma 2}
	\sigma^{j+s}(1)>\sigma^{j+s+1}(1),
	\quad s=1,2,...,l-1,
\end{align}
and the $(h_{i})_{i\in[l]\setminus{j}}$ are determined by
\begin{align*}
	h_{j+s}=d-1-\sum_{t=1}^{s}\mu_{\sigma^{j+t}(1)}, \quad s=1,2,...,l-1,
\end{align*}
where $h_{j+s}:=h_{j+s-l}$ if $j+s>l$.
As a consequence,
from equation \eqref{eqn:sigma 2},
we must have $\sigma^{j+1}(1)=l, \sigma^{j+2}(1)=l-1,..., \sigma^{j+l}(1)=1$,
which corresponds to the fact that the only possibility is $j=l$ and $\sigma=(l,l-1,...,1)$.
The result in this case is then
\begin{align}\label{eqn:D case2}
	D(\mu;\binom{d}{2})_{\sigma,I}
	=\begin{cases}
		\frac{(-1)^{l-1}}{\mu_1\cdots\mu_l}
		\cdot \frac{1}{d\cdot (d-1)!}\cdot(-1)^{l-1}
		=\frac{1}{d! \cdot \mu_1\cdots\mu_l}  & \text{if}\ I=\{l\}, \sigma=(l,l-1,...,1),\\
		0 & \text{otherwise}.
	\end{cases}
\end{align}
This formula also holds for the $l=2$ case discussed in equation \eqref{eqn:D l=2}.

Case iii). For $\binom{d-1}{2}<k<\binom{d}{2}$,
similar to the Case i),
if there is $j\in I$ such that $n_j\neq0$,
then $\sum_{i\in I}m_i\leq d-2$ and $k=\frac{1}{2}\sum_{i\in I} (m_i^2+m_i-n_i^2-n_i)$ must be less than or equal to $\binom{d-1}{2}$.
And even we assume $n_i=0$ for all $i\in I$,
then $|I|=1$ reduces to the Case ii),
and $|I|\geq2$ gives $\sum_{i\in I}m_i\leq d-2$,
which also results in $k\leq \binom{d-1}{2}$.
As a consequence, there is no suitable $I$ and $(m_i,n_i)_{i\in I}$ which makes a non-zero contribution to $D(\mu;k)$ with $\binom{d-1}{2}<k<\binom{d}{2}$.

Case iv). For $k=\binom{d-1}{2}$,
if there is some $j$ such that $n_j\neq0$,
from equation \eqref{eqn:|mu|=mn},
we have $\sum_{i\in I}m_i\leq d-1-|I|$.
Then $k=\frac{1}{2}\sum_{i\in I} (m_i^2+m_i-n_i^2-n_i)<\binom{d-1}{2}$.
Thus, to obtain a non-zero $D(\mu;\binom{d-1}{2})_{\sigma;I}$,
we still need $n_i=0$ for all $i\in I$.
The case of $|I|=1$ reduces to the Case ii),
and thus we can assume $|I|\geq2$.
Then $\sum_{i\in I}m_i=d-|I|$ and $k=\frac{1}{2}\sum_{i\in I} (m_i^2+m_i)=\binom{d-1}{2}$ enforce $|I|=2$.
We denote $|I|=\{j,j'\}$ with $j<j'$,
then we must have $(m_j,m_{j'})=(d-2,0)$ or $(m_j,m_{j'})=(0,d-2)$.
The case of $|I|=l=2$ has been discussed in equation \eqref{eqn:mu=(d-1,1) I=[l]},
so we can now assume $l\geq3$.
We first deal with the $(m_j,m_{j'})=(d-2,0)$ case.
If $j'=j+1$,
then when considering the $i=j$-th term in the right-hand side of equation \eqref{eqn:Hl as AA},
we need to use the $1$-st case in equation \eqref{eqn:9cases}.
However, $m_j+n_{j+1}=d-2$ makes a contradiction with $m_j+n_{j+1}=\mu_{\sigma^{j+1}(1)}-1$.
Thus we know $j'>j+1$ and $j+1\notin I$.
Moreover, the $2$-nd case of equation \eqref{eqn:9cases} is impossible since $h_{j+1}\geq0$.
Then $h_{j+1}=d-2-\mu_{\sigma^{j+1}(1)}$ is derived from the $3$-rd case of equation \eqref{eqn:9cases}.
Similar to the discussions in Case ii),
we finally have,
the $l$-cycle $\sigma$ must satisfy
\begin{align*}
	\sigma^{j+s}(1)>\sigma^{j+s+1}(1),\quad s=1,2,...,j'-j-1,
\end{align*}
and the $(h_{i})_{i\in\{j+1,...,j'\}}$ are determined by
\begin{align*}
	h_{j+s}=d-2-\sum_{t=1}^{s}\mu_{\sigma^{j+t}(1)}, \quad s=1,2,...,j'-j-1.
\end{align*}
When considering the $i=(j'-1)$-th term in the right-hand side of equation \eqref{eqn:Hl as AA},
we need to use the $5$-th case of equation \eqref{eqn:9cases} since $j'-1\notin I, j'\in I$ and $h_{j'-1}\geq0$.
Then we obtain $\sigma^{j'-1}(1)>\sigma^{j'}(1)$ and 
\begin{align*}
	d-2-\sum_{t=1}^{j'-j-1}\mu_{\sigma^{j+t}(1)}
	=\mu_{\sigma^{j'}(1)}-1.
\end{align*}
Thus,
we must have $j'-j-1=l(\mu)-2=l-2$ and $\mu_{\sigma^{j'+1}(1)}=1$,
which gives $j'=l, j=1$ and $\sigma$ should satisfy $\sigma^2(1)>\sigma^3(1)>\cdots>\sigma^l(1)$.
Moreover,
from the $i=l$-th term in the right-hand side of equation \eqref{eqn:Hl as AA} and the $1$-st case of equation \eqref{eqn:9cases},
we have $0=\mu_{\sigma^1(1)}-1$.
Finally,
if $\mu=(\mu_1,...,\mu_l)$ satisfies $\mu_l>1$,
then there is no such $\sigma$ which can make a non-zero contribution.
If we assume $s\leq l$ is the minimal number such that $\mu_s=1$,
then $\sigma(1)$ must be greater than or equal to $s$,
and additionally $\sigma(1)$ could not be 1 since $\sigma$ is a $l$-cycle. 
Thus, 
the possible $\sigma$ are given by
\begin{align}\label{eqn:l-s sigma}
	\sigma=(a,l,l-1,...,\hat{a},...,1)
	\  \text{with}\  \text{max}\{s,2\}\leq a\leq l.
\end{align}
There are a total of $(l-s+\delta_{s\neq1})$ possible $\sigma$ here.

One can deal with the case of $(m_j,m_{j'})=(0,d-2)$ similarly and obtain the following conditions:
\begin{align*}
	j=j'-1,
	\qquad \sigma^{j'+1}(1)>\sigma^{j'+2}(1)>\cdots>\sigma^{j+l}(1),
	\qquad \mu_{\sigma^{j'}(1)}=1.
\end{align*}
Since $\sigma^{j+l}(1)=\sigma^j(1)\neq1$,
we must have $j'=l, j=l-1$,
$\sigma=(l,l-1,...,1)$,
and $\mu_1=1$.
That is to say,
only when $\mu=(1^d)=(1,1,...,1)$,
there is exactly one such $\sigma=(l,l-1,...,1)$ which makes a non-zero contribution.
Combing with the $(m_j,m_{j'})=(d-2,0)$ case discussed near equation \eqref{eqn:l-s sigma},
we have
\begin{align}\label{eqn:d-12}
	D(\mu;\binom{d-1}{2})
	=\sum_{\sigma\in\{l-\text{cycles}\} \atop I\subseteq[l]} D(\mu;\binom{d-1}{2})_{\sigma,I}
	=-\frac{l-s+1}{(d-1)!\cdot \mu_1\cdots\mu_l},
\end{align}
where $s$ is the minimal number such that $\mu_s=1$, and if $\mu_l\neq1$, then $s:=l+1$.
We notice that the equation \eqref{eqn:d-12} also holds for the $l=l(\mu)=2$ case which is discussed in equation \eqref{eqn:mu=(d-1,1) I=[l]}.

In conclusion,
we take the coefficient of $\hbar^{2g-2+d+l}$ in both sides of equation \eqref{eqn:H lpoint as D} to obtain
\begin{align*}
	H_{g;\mu}
	=\frac{2}{d! \cdot \mu_1 \cdots \mu_{l}}
	\cdot \sum_{k=1}^{\infty}
	\big(C(\mu;k) \cdot k^{d+2g-2+l}\big),
\end{align*}
where the coefficients $C(\mu;k)$ are given by
\begin{align}\label{eqn:C as D}
	C(\mu;k)
	=\frac{d!\cdot \mu_1 \cdots \mu_{l}}{2} \cdot
	\big(D(\mu;k)+(-1)^{2g-2+d+l}D(\mu;-k)\big)
	=d!\cdot \mu_1 \cdots \mu_{l} \cdot D(\mu;k).
\end{align}
In the last equal sign of above equation,
we have used equation \eqref{eqn:Dmuk=-Dmu-k}.
First,
from equation \eqref{eqn:|mu|=mn},
we know that $C(\mu;k)$ must be an integer by comparing equations \eqref{eqn:def cont}, \eqref{eqn:C as D} and the fact
\begin{align*}
	d! \cdot\frac{1}{\prod_{i\in I} (m_i+n_i+1) m_i! n_i!}
	=\frac{\sum_{i\in I} (m_i+n_i+1)}{\prod_{i\in I} (m_i+n_i+1) m_i! n_i!}
	\in\mathbb{Z}.
\end{align*}
Then for $k>\binom{d}{2}$ or $\binom{d-1}{2}<k<\binom{d}{2}$,
from the Cases i) and iii) discussed above,
there is no suitable $\sigma$ and $I$ which makes a non-zero contribution,
thus, in these cases,
\begin{align*}
	C(\mu;k)
	=d!\cdot \mu_1 \cdots \mu_{l} \cdot D(\mu;k)
	=0.
\end{align*}
For $k=\binom{d}{2}$,
from the Case ii) discussed above,
the contribution comes from only $I=\{l\}$ and $\sigma=(l,l-1,...,1)$.
Then from equation \eqref{eqn:D case2},
we have
\begin{align*}
	C(\mu;\binom{d}{2})
	=d!\cdot \mu_1 \cdots \mu_{l} \cdot D(\mu;\binom{d}{2})_{(l,l-1,...,1),\{l\}}
	=1.
\end{align*}
For $k=\binom{d-1}{2}$,
from the Case iv) discussed above,
we assume $d\geq3$,
then the contribution comes from a total of $\#\{i\ |\ \mu_i=1\}$ possible $\sigma$ and corresponding $I$.
From equation \eqref{eqn:d-12},
we have
\begin{align*}
	C(\mu;\binom{d-1}{2})
	=d!\cdot \mu_1 \cdots \mu_{l}
	\cdot \frac{-\#\{i\ |\ \mu_i=1\}} {(d-1)!\cdot\mu_1\cdots\mu_l}
	=-d\cdot \#\{i\ |\ \mu_i=1\}.
\end{align*}
This theorem is thus proved.
\end{proof}

\begin{rmk}
	The existence of equation \eqref{eqn:H as C} and the formula for $C(\mu;\binom{d}{2})$ were obtained by Do, He and Robertson as Theorem 1.5 and Theorem 1.6 in \cite{DHR}, respectively.
	The vanishing of $C(\mu;k)$ for $\binom{d-1}{2}<k<\binom{d}{2}$ was also conjectured by them as Conjecture 4.1 in \cite{DHR}.
\end{rmk}

\begin{ex}
	We provide two examples of the $l=l(\mu)=1$ case.
	When $\mu=(5)$,
	we have
	\begin{align*}
		\sum_{g=0}^\infty 
		\frac{\hbar^{2g+4}}{(2g+4)!} H_{g;(5)}
		=&\frac{1}{5!\cdot 5}
		\big({\mathrm e}^{10 \hbar}-4 {\mathrm e}^{5 \hbar}+6-4 {\mathrm e}^{-5 \hbar}+{\mathrm e}^{-10 \hbar}\big).
	\end{align*}
	Then, after taking the coefficient of $\hbar^{2g+4}$ in both sides of above equation,
	we obtain
	\begin{align*}
		H_{g;(5)}
		=\frac{2}{5!\cdot 5}
		\big(10^{2g+4}-4\cdot5^{2g+4}\big),
	\end{align*}
	which matches with data in Table 2 of \cite{DHR}.
	
	When $\mu=(10)$,
	we have
	\begin{align*}
		\sum_{g=0}^\infty 
		\frac{\hbar^{2g+9}}{(2g+9)!} H_{g;(10)}
		=&\frac{1}{10!\cdot 10}
		\cdot 
		\big({\mathrm e}^{45 \hbar}-9 {\mathrm e}^{35 \hbar}+36 {\mathrm e}^{25 \hbar}-84 {\mathrm e}^{15 \hbar}+126 {\mathrm e}^{5 \hbar}-126 {\mathrm e}^{-5 \hbar}\\
		&+84 {\mathrm e}^{-15 \hbar}-36 {\mathrm e}^{-25 \hbar}+9 {\mathrm e}^{-35 \hbar}-{\mathrm e}^{-45 \hbar}\big),
	\end{align*}
	and
	\begin{align*}
		H_{g;(10)}
		=&\frac{2}{10!\cdot 10}
		\cdot 
		\big(45^{2g+9}-9\cdot35^{2g+9}+36\cdot25^{2g+9}
		-84\cdot15^{2g+9}+126\cdot5^{2g+9}\big).
	\end{align*}
\end{ex}

\begin{ex}
	We provide an example of the $l=l(\mu)=2$ case.
	When $\mu=(5,2)$,
	we have
	\begin{align*}
		\sum_{g=0}^\infty 
		\frac{\hbar^{2g+7}}{(2g+7)!} H_{g;(5,2)}
		=&\frac{1}{7!\cdot5\cdot2}
		\cdot\big({\mathrm e}^{21\hbar}
		-6 {\mathrm e}^{14\hbar}-21 {\mathrm e}^{11\hbar}+35 {\mathrm e}^{9\hbar}
		+70 {\mathrm e}^{6\hbar}-84 {\mathrm e}^{4\hbar}-105 {\mathrm e}^{\hbar}\\
		&+105{\mathrm e}^{-\hbar}+84{\mathrm e}^{-4\hbar}-70{\mathrm e}^{-6\hbar}
		-35{\mathrm e}^{-9\hbar}+21{\mathrm e}^{-11\hbar}+6{\mathrm e}^{-14\hbar}
		-{\mathrm e}^{-21\hbar}\big),
	\end{align*}
	and
	\begin{align*}
		H_{g;(5,2)}
		=&\frac{2}{7!\cdot5\cdot2}
		\cdot\big(21^{2g+7}-6\cdot14^{2g+7}
		-21\cdot11^{2g+7}+35\cdot9^{2g+7}
		+70\cdot 6^{2g+7}\\
		&-84\cdot4^{2g+7}
		-105\cdot 1^{2g+7}.
	\end{align*}
\end{ex}

\begin{ex}
	We provide an example of the $l=l(\mu)=3$ case.
	When $\mu=(3,2,1)$,
	we have
	\begin{align*}
		\sum_{g=0}^\infty 
		\frac{\hbar^{2g+7}}{(2g+7)!} H_{g;(3,2,1)}
		=&\frac{1}{6!\cdot3\cdot2}
		\cdot\big({\mathrm e}^{15\hbar}
		-6 {\mathrm e}^{10\hbar}
		-15 {\mathrm e}^{7\hbar}
		-20 {\mathrm e}^{6\hbar}
		+39 {\mathrm e}^{5\hbar}
		+120 {\mathrm e}^{4\hbar}
		+35 {\mathrm e}^{3\hbar}\\
		&-150 {\mathrm e}^{2\hbar}
		-210 {\mathrm e}^\hbar
		+210{\mathrm e}^{-\hbar}+150{\mathrm e}^{-2\hbar}
		-35{\mathrm e}^{-3\hbar}
		-120{\mathrm e}^{-4\hbar}
		-39{\mathrm e}^{-5\hbar}\\
		&+20{\mathrm e}^{-6\hbar}
		+15{\mathrm e}^{-7\hbar}
		+6{\mathrm e}^{-10\hbar}
		-{\mathrm e}^{-15\hbar}\big),
	\end{align*}
	and
	\begin{align*}
		H_{g;(3,2,1)}
		=&\frac{2}{6!\cdot3\cdot2}
		\cdot\big(15^{2g+7}-6 \cdot10^{2g+7}
		-15 \cdot7^{2g+7}-20 \cdot6^{2g+7}
		+39 \cdot5^{2g+7}\\
		&+120 \cdot4^{2g+7}
		+35 \cdot3^{2g+7}-150 \cdot2^{2g+7}
		-210 \cdot1^{2g+7}\big).
	\end{align*}
\end{ex}

\section{Conflict of interest and data availability statement}
The author states that there is no conflict of interest, and
no datasets were generated or analysed during the current study.

\vspace{.2in}
{\em Acknowledgements}.
%The author is grateful to the anonymous referee for many excellent suggestions.
%The author would like to thank Professors Amdeberhan, Andrews, and Ballantine for their valuable discussions and interest in this work.
%The author would also like to thank Professors Xiaobo Liu, Jian Zhou, and Xiangyu Zhou for their encouragement.
The author would like to thank Xiaobo Liu, Jian Zhou, and Xiangyu Zhou for their encouragement and Xuhui Zhang for helpful discussions.
The author is supported by the NSFC grants (No. 12288201, 12401079),
the China Postdoctoral Science Foundation (No. 2023M743717),
and the China National Postdoctoral Program for Innovative Talents (No. BX20240407).

\vspace{.2in}

%%%%%%%%%%%%%%%%%%%%%%%%%%%%%%%%%%%%%%%%%%%%%%%
\renewcommand{\refname}{Reference}
\bibliographystyle{plain}
\bibliography{reference}
\vspace{30pt} \noindent
%%%%%%%%%%%%%%%%%%%%%%%%%%%%%%%%%%%%%%%%%%%%%%%%%%%%%%%%%%%%%%%%%%%%%%%%%%%%%%%%%%%
\end{document}